\pgfplotsset{every tick label/.append style={font=\footnotesize}}
\pgfplotsset{label style={font=\footnotesize}}
\crefname{hypothesis}{Hypothesis}{Hypotheses}
\crefname{assumption}{assumption}{assumptions}
\crefname{property}{property}{properties}
\newcommand{\plotsize}{0.48}
\newcommand{\eigenval}{l}
\newcommand{\dotplot}[1][]{\addplot [red, mark=*, thick, mark options={solid},mark size=1,#1]}
\newcommand{\oneplot}[1][]{\addplot [red, mark=triangle*, thick, mark options={solid,draw=black},#1]}
\newcommand{\sametwoplot}[1][]{\addplot [red, mark=diamond*, thick, mark options={solid,draw=black},#1]}
\newcommand{\samethreeplot}[1][]{\addplot [red, mark=pentagon*, thick, mark options={solid,draw=black},#1]}
\newcommand{\oneplotdesc}{red triangles}
\newcommand{\sametwoplotdesc}{red diamonds}
\newcommand{\samethreeplotdesc}{red pentagons}
\newcommand{\TODO}[1][]{{\color{red}TODO\ifthenelse{\equal{#1}{}}{}{: #1}}}
\title{Boundary element methods for Helmholtz problems with weakly imposed boundary conditions
\thanks{Submitted to the editors 28 April 2020. Second draft submitted to the editors 14 September 2021.}}
\author{Timo Betcke\thanks{Department of Mathematics, University College London, WC1E 6BT, UK
  (\email{t.betcke@ucl.ac.uk}).}
\and Erik Burman\footnotemark[2]\thanks{Department of Mathematics, University College London, WC1E 6BT, UK
  (\email{e.burman@ucl.ac.uk}).}
\and Matthew W. Scroggs\thanks{Department of Engineering, University of Cambridge, CB2 1PZ, UK
  (\email{mws48@cam.ac.uk}, \url{https://www.mscroggs.co.uk}).}}
\begin{document}

\maketitle

\begin{abstract}
We consider boundary element methods where the Calder\'on projector is
used for the system matrix and boundary conditions are weakly imposed using a
particular variational boundary operator designed using techniques
from augmented Lagrangian methods. Regardless of the boundary conditions,
both the primal trace variable and the flux are approximated. We focus on the imposition of Dirichlet
conditions on the Helmholtz equation, and extend the analysis of the Laplace problem from \emph{Boundary element methods with weakly imposed boundary conditions}
\cite{BeBuScro18} to this case.
The theory is illustrated by a series of numerical examples.
\end{abstract}

\begin{keywords}
  boundary element methods, Nitsche's method, Helmholtz equation, wave scattering
\end{keywords}

\begin{AMS}
    65N38, 65R20
\end{AMS}

%%%%%%%%%%%%%%%%%%%%%%%%%%%%%%%%%%%%%
%%%%%%%%%%%%%% SECTION %%%%%%%%%%%%%%
%%%%%%%%%%%%%%%%%%%%%%%%%%%%%%%%%%%%%
\section{Introduction}\label{sec:intro}
In a previous paper \cite{BeBuScro18}, we introduced a method of weakly imposing boundary conditions on the boundary element method,
inspired by Nitsche's method \cite{Nit71} and Babu\u{s}ka's penalty method \cite{Babuska1973} for the finite element method.
Weak imposition of boundary conditions here means that neither the Dirichlet trace nor the Neumann trace is imposed exactly,
instead an $h$-dependent boundary condition is imposed that is weighted in such a way that optimal error estimates may be derived
and the exact boundary condition is recovered in the asymptotic limit.

In \cite{BeBuScro18}, we introduced the weak imposition of Dirichlet, Neumann and Robin boundary conditions on Laplace's equation;
in \cite{BuFrScro19}, we applied this method to Signorini contact conditions, again for Laplace's equation.
In this paper, we look at how this method and its analysis can be extended to be used for the Helmholtz equation, focussing on the
exterior Helmholtz Dirichlet problem: Find $u=u\inc+u\scat\in\Hlocopspace[\Omega\exterior]{\Delta}{1}$ such that
\begin{subequations}
\label{eq:Helmholtz}
\begin{align}
-\Delta u-k^2u&=0&&\text{in }\Omega\exterior,\label{eq:Helmholtz_pm}\\
\frac{\partial u\scat}{\partial\abs{\x}}-\ii ku\scat&=o(\abs{\x}^{-1})&&\text{as }\abs{\x}\to\infty,
\label{eq:Helmholtz_infty}\\
u&=g\D&&\text{on }\Gamma,\label{eq:Laplace_diribc}
\end{align}
\end{subequations}
where $\Omega\interior\subset\RR^3$ is a bounded interior open domain with polyhedral boundary $\Gamma$,
$\Omega\exterior=\RR^3\setminus\overline{\Omega\interior}$ is the open domain exterior to $\Omega\interior$,
$\vec{\nu}$ is the unit normal to the surface $\Gamma$ pointing outwards into $\Omega\exterior$,
$u\inc$ is a known incident wave,
and $k\in\RR$ is the wavenumber of the problem.
We assume that $g\D\in\Hspace[\Gamma]{1/2}$.
Whenever it is ambiguous, we write $\vec{\nu}_\vec{x}$ to denote the outward-pointing normal at the point $\vec{x}$.

Due to the Sommerfeld radiation condition \cref{eq:Helmholtz_infty}, the problem \cref{eq:Helmholtz} has
a unique solution \cite{Sommerfeld}.
The formulation of Helmholtz problems using boundary integral equations are covered in detail in \cite{Nedelec01},
and their discretisation is examined in \cite[section 7.6]{Stein07}.

The use of discretisation and the boundary element method to solve Helmholtz problems has been well studied.
For sufficiently small wavenumbers $k$, and sufficiently smooth boundaries, the operators involved are coercive,
and hence \emph{a priori} error bounds can be derived \cite{Spence15,Spence11,Graham07}.
For values of $k$ and domains for which coercivity cannot be shown, error bounds have been shown that involve both the mesh size
$h$ and the wavenumber $k$ \cite{Banjai07,Schatz74}. If the wavenumber is varied, then the mesh must be refined to keep the 
value of $hk$ constant in order to maintain a low error \cite{Graham15}.
The use of $hp$-BEM methods for Helmholtz has also been studied and analysed \cite{Melenk11-2,Melenk12}.

The use of blocked operator formulations to solve Helmholtz problems is common for domain decomposition problems,
where the boundary element method is used in multiple domains with different wavenumbers \cite{Sayas08,Sayas008},
or a combination of finite and boundary element methods can be used \cite{Hiptmair06}.
To avoid the appearance of spurious resonances in solutions, coupled stabilised formulations can be solved \cite{Steinbach08}.
The formulations presented in this paper are, in general, more expensive than standard formulations, as they require the
assembly of the full Calder\'on system. In these cases, however, larger blocked systems are already being assembled,
and so it may be possible to impose boundary conditions on them weakly with little additional cost.

The method proposed in this paper is applicable to low and medium frequency problems. In practice, preconditioning limits the method's effectiveness
for higher frequency problems. However, one main advantage of this method is its immunity to eigenvalues of the interior problem:
the solution to \cref{eq:Helmholtz} can be found for any real wavenumber using our method without any modification to stabilise
against eigenvalues being necessary.

Although we do not present an analysis of problems with mixed boundary conditions,
we discuss our method's potential application to such problems in \cref{sec:mixed}.
As we discussed in \cite{BeBuScro18} for Laplace problems, our method of weakly
imposing boundary conditions is advantageous when solving mixed problems, as they can
be implemented by assembling different sparse terms on different parts of the mesh
without needing to adapt the Calder\'on part of the formulation.

In \cref{sec:bem}, we define the boundary operators used in our formulations, and present some of their important properties.
In \cref{sec:derive}, we derive our formulation for Dirichlet Helmholtz problems.
In \cref{sec:analysis}, we analyse this formulation, and prove \emph{a priori} error bounds.
In \cref{sec:numerical}, we present some numerical experiments, and in \cref{sec:conclusions} we give some concluding remarks.

%%%%%%%%%%%%%%%%%%%%%%%%%%%%%%%%%
%%%%%%%%%%%%%%%%%%%%%%%%%%%%%%%%%
%%%%%%%%%%%%%%%%%%%%%%%%%%%%%%%%%
%%%%%%%%%%%%%%%%%%%%%%%%%%%%%%%%%
\section{Boundary operators}\label{sec:bem}
We define the Green's function for the Helmholtz operator in $\RR^3$ by
\begin{equation}
G(\x,\y) =
\frac{\e^{\ii k\seminorm{\x-\y}}}{4\uppi|\x-\y|}.\label{def:HelmGreen}
\end{equation}
In this paper, we focus on the problem in
$\RR^3$. Similar analysis can be used for problems in $\RR^2$, in which case this definition should be replaced by
$G(\x,\y) =\frac\ii4 H_0^{(1)}(k\seminorm{\x-\y})$, where $H_0^{(1)}$ is a Hankel function of the first kind.

In the standard fashion (see eg \cite[chapter 6]{Stein07}), we define
the single layer potential operator, $\pop{V}:\Hspace{-1/2}\to \Hlocspace[\Omega\exterior\cup\Omega\interior]{1}$,
and
the double layer potential operator, $\pop{K}:\Hspace{1/2}\to \Hlocspace[\Omega\exterior\cup\Omega\interior]{1}$,
for $v\in \Hspace{1/2}$, $\mu\in \Hspace{-1/2}$, and $\x\in\RR^3\setminus\Gamma$ by
\begin{align}
(\popV\mu)(\x) &:= \int_{\Gamma} G(\x,\y) \mu(\y)\dx[\y]\label{eq:single},\\
(\popK v)(\x) &:= \int_{\Gamma} \frac{\partial G(\x,\y)}{\partial\vec\nu_\y} v(\y)\dx[\y]\label{eq:double}.
\end{align}
We recall that $\vec\nu_\y$ denotes the normal to the surface $\Gamma$ at the point $\y$ pointing outwards into $\Omega\exterior$.

We define the space $\Hlocopspace[\Omega\exterior]{\Delta}{1}:=\{v\in \Hlocspace[\Omega\exterior]{1}:\Delta v\in\Ltwospace[\Omega\exterior]\}$, and
then we define the exterior Dirichlet and Neumann traces, $\trace[e]{0}:\Hlocspace[\Omega\exterior]{1}\to \Hspace{1/2}$
and $\trace[e]{1}:\Hlocopspace[\Omega\exterior]{\Delta}{1}\to \Hspace{-1/2}$, by
\begin{align}
\trace[e]{0} f(\x)&:=\lim_{\Omega\exterior\ni\y\to\x\in\Gamma}f(\y),\\
\trace[e]{1} f(\x)&:=\lim_{\Omega\exterior\ni\y\to\x\in\Gamma}\vec\nu_\x\cdot\nabla f(\y).
\end{align}
The interior traces $\trace[i]{0}$ and $\trace[i]{1}$ can be defined in the same way but taking the limit from within $\Omega\interior$.

We recall that if the Dirichlet and Neumann traces of a
solution of \cref{eq:Helmholtz} are known, then the potentials \cref{eq:single} and
\cref{eq:double} may be used to reconstruct the function in $\Omega\exterior$ using the
following relation.
\begin{equation}\label{eq:represent}
u = \popK(\trace[e]{0} u) - \popV(\trace[e]{1} u).
\end{equation}

It is also known \cite[lemma 6.6]{Stein07} that for all $\mu \in \Hspace{-1/2}$, the function
\begin{equation}\label{eq:helm_sing_rec}
u^{\popV}_\mu := \popV \mu
\end{equation}
satisfies $-\Delta u^{\popV}_\mu-k^2u^{\popV}_\mu = 0$ in $\mathbb{R}^3\setminus\Gamma$.
Similarly, for the double layer potential there holds \cite[lemma 6.10]{Stein07} that for all $v \in \Hspace{1/2}$, the function
\begin{equation}\label{eq:helm_doub_rec}
u^{\popK}_v := \popK v
\end{equation}
satisfies $-\Delta u^{\popK}_v-k^2u^{\popK}_v = 0$ in $\mathbb{R}^3\setminus\Gamma$.

We define
$\average{\trace{0} f}$ and $\average{\trace{1} f}$ 
to be the averages of the interior and exterior Dirichlet and Neumann traces of $f$.
We define the single layer, double layer, adjoint double layer, and hypersingular boundary integral operators,
$\bopV:\Hspace{-1/2}\to \Hspace{1/2}$,
$\bopK:\Hspace{1/2}\to \Hspace{1/2}$,
$\bopKadj:\Hspace{-1/2}\to \Hspace{-1/2}$, and
$\bopW:\Hspace{1/2}\to \Hspace{-1/2}$,
by
\begin{subequations}\label{eq:these_definitions}
\begin{align}
(\bopK v)(\x)&:=\average{\trace{0}\popK v}(\x),&
(\bopV \mu)(\x)&:=\average{\trace{0}\popV\mu}(\x),\\
(\bopW v)(\x)&:=-\average{\trace{1}\popK v}(\x),&
(\bopKadj\mu)(\x)&:=\average{\trace{1}\popV\mu}(\x),
\end{align}
\end{subequations}
where 
$\x\in\Gamma$,
$v\in \Hspace{1/2}$ and $\mu\in \Hspace{-1/2}$
\cite[chapter 6]{Stein07}.

We define $\jump{\trace{0}}:=\trace[e]{0}-\trace[i]{0}$ and $\jump{\trace{1}}:=\trace[e]{1}-\trace[i]{1}$
to be the jumps of the interior and exterior Dirichlet and Neumann traces across the boundary.
In \cite[chapter 6]{Stein07}, the following jump conditions are shown:
\begin{align}
\jump{\trace{0}}\popV=\jump{\trace{1}}\popK&=0,&
\jump{\trace{1}}\popV=
-\jump{\trace{0}}\popK&=-\bopI,
\label{eq:and_these_definitions}
\end{align}
where $\bopI$ is the identity operator.

It follows from \cref{eq:these_definitions,eq:and_these_definitions} that
\begin{subequations}
\begin{align}
\trace[e]{0}\popV &= \bopV,&
\trace[e]{1}\popV &= -\tfrac12\bopI+\bopKadj,\\
\trace[e]{0}\popK &= \tfrac12\bopI+\bopK,&
\trace[e]{1}\popK &= -\bopW,\\
\trace[i]{0}\popV &= \bopV,&
\trace[i]{1}\popV &= \tfrac12\bopI+\bopKadj,\label{crossref_this_trace}\\
\trace[i]{0}\popK &= -\tfrac12\bopI+\bopK,&
\trace[i]{1}\popK &= -\bopW.
\end{align}
\end{subequations}

We let $\Ltwoinner{\cdot}{\cdot}$ denote the $\Hspace{1/2}$--$\Hspace{-1/2}$ duality pairing. For all
$v,w\in\Hspace{1/2}$, $\mu,\eta\in\Hspace{-1/2}$, and $a\in\CC$,
this pairing satisfies
\begin{subequations}
\begin{align}
\Ltwoinner{v+w}{\mu}&=\Ltwoinner{v}{\mu}+\Ltwoinner{w}{\mu},&
\Ltwoinner{v}{\mu+\eta}&=\Ltwoinner{v}{\mu}+\Ltwoinner{v}{\eta},\\
\Ltwoinner{av}{\mu}&=a\Ltwoinner{v}{\mu},&
\Ltwoinner{v}{a\mu}&=\overline{a}\Ltwoinner{v}{\mu},
\end{align}
\end{subequations}
where $\overline{a}$ denotes the complex conjugate of $a$.

Following \cite[section 2.5]{Stein07}, the
norms $\Hnorm{1/2}{\cdot}$ and $\Hnorm{-1/2}{\cdot}$ are defined, for $v\in\Hspace{1/2}$ and $\mu\in\Hspace{-1/2}$,
by
\begin{align}
\Hnorm{1/2}{v}&
:=\left(\Ltwonorm{v}+\int_\Gamma\int_\Gamma\frac{\left(v(\vec{x})-v(\vec{y})\right)^2}{\seminorm{\vec{x}-\vec{y}}^3}\dx[\vec{x}]\dx[\vec{y}]\right)^{\tfrac12},\\
\Hnorm{-1/2}{\mu}&:=\sup_{w\in\Hspace{1/2}\setminus\{0\}}\frac{\seminorm{\Ltwoinner{w}{\mu}}}{\Hnorm{1/2}{w}}.
\end{align}

The following results are known for the single layer and hypersingular operators in
$\RR^3$.

\begin{lemma}[G{\aa}rding's inequality for $\bopV$]\label{lemma:Helmholtz_coercivity1}
There exists a compact operator $\bop{T}_\bop{V}:\Hspace{-1/2}\to\Hspace{1/2}$ and $\alpha_\bopV>0$ such that
\begin{align}
\alpha_\bopV \Hnorm{-1/2}{\mu}^2 &\leqslant \Ltwoinner{\bopV \mu}{\mu}+\Ltwoinner{\bop{T}_\bop{V}\mu}{\mu},
&&\forall \mu \in \Hspace{-1/2}.
\end{align}
\end{lemma}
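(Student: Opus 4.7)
The plan is to reduce this to the well-known coercivity of the Laplace single layer operator by treating the Helmholtz case as a compact perturbation.

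First I would introduce the Laplace Green's function $G_0(\vec{x},\vec{y}) = 1/(4\uppi|\vec{x}-\vec{y}|)$ and the associated Laplace single layer operator $\bopV_0:\Hspace{-1/2}\to\Hspace{1/2}$. A standard result (see e.g.\ \cite[chapter 6]{Stein07}) is that $\bopV_0$ is coercive on $\Hspace{-1/2}$: there exists $\alpha_\bopV > 0$ such that
\begin{equation*}
\Ltwoinner{\bopV_0 \mu}{\mu} \geqslant \alpha_\bopV \Hnorm{-1/2}{\mu}^2 \qquad \forall \mu\in \Hspace{-1/2}.
\end{equation*}
With this in hand I would define $\bop{T}_\bop{V} := \bopV_0 - \bopV$, so that $\Ltwoinner{\bopV\mu}{\mu} + \Ltwoinner{\bop{T}_\bop{V}\mu}{\mu} = \Ltwoinner{\bopV_0\mu}{\mu}$ and the inequality immediately follows once compactness of $\bop{T}_\bop{V}$ is established.

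The main work is therefore the compactness claim. The kernel of $\bop{T}_\bop{V}$ is
\begin{equation*}
G_0(\vec{x},\vec{y}) - G(\vec{x},\vec{y}) = \frac{1 - \e^{\ii k\seminorm{\vec{x}-\vec{y}}}}{4\uppi|\vec{x}-\vec{y}|},
\end{equation*}
and since $1 - \e^{\ii k r} = -\ii k r + O(r^2)$ near $r = 0$, the singularity of $1/|\vec{x}-\vec{y}|$ cancels, leaving a kernel that is continuous (in fact smooth) on $\Gamma\times\Gamma$, even though $\Gamma$ is only polyhedral. I would then use the standard fact that an integral operator with a sufficiently regular kernel on $\Gamma\times\Gamma$ maps $\Hspace{-1/2}$ continuously into a space which is compactly embedded into $\Hspace{1/2}$; concretely, one shows by differentiating under the integral sign that $\bop{T}_\bop{V}:\Hspace{-1/2}\to \Hspace[\Gamma]{s}$ is bounded for some $s > 1/2$, and then invokes the compact embedding $\Hspace[\Gamma]{s}\hookrightarrow \Hspace{1/2}$.

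The hard part will be handling the regularity of $\bop{T}_\bop{V}\mu$ carefully on a Lipschitz polyhedral surface: one cannot simply claim that the image lies in $C^\infty(\Gamma)$ because tangential derivatives across edges of $\Gamma$ are restricted, but one only needs just enough regularity for a compact embedding into $\Hspace{1/2}$, which follows from mapping into $\Hspace{1}$ (or a similar intermediate space) by exploiting the boundedness of the derivatives of the smooth kernel. Once compactness is in place, combining it with coercivity of $\bopV_0$ yields the stated G{\aa}rding inequality.
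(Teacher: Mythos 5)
Your proposal is essentially the argument behind the reference the paper cites, \cite[theorem 6.40]{Stein07}: write $\bopV = \bopV_0 - \bop{T}_\bop{V}$ with $\bopV_0$ the (coercive) Laplace single layer operator, and establish compactness of the difference $\bop{T}_\bop{V} = \bopV_0 - \bopV$ by exploiting the cancellation of the $1/|\vec{x}-\vec{y}|$ singularity in the kernel. One small inaccuracy: the parenthetical ``(in fact smooth)'' overstates the regularity of the kernel difference, since its Taylor expansion in $r=|\vec{x}-\vec{y}|$ contains odd powers of $r$, so the kernel is only Lipschitz (not $C^\infty$) as a function of $(\vec{x},\vec{y})$ across the diagonal; but, as you correctly note later, all one needs is enough extra smoothing to land in some $\Hspace[\Gamma]{s}$ with $s>1/2$, and the weakly singular estimates on the kernel and its derivatives do supply that.
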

\begin{proof}\cite[theorem 6.40]{Stein07}.\end{proof}

\begin{lemma}[G{\aa}rding's inequality for $\bopW$]\label{lemma:Helmholtz_coercivity2}
There exists a compact operator $\bop{T}_\bop{W}:\Hspace{1/2}\to\Hspace{-1/2}$ and $\alpha_\bopW>0$ such that
\begin{align}
\alpha_\bopW \Hnorm{1/2}{v}^2 &\leqslant \Ltwoinner{\bopW v}{v}+\Ltwoinner{\bop{T}_\bop{W}v}{v},
&&\forall v \in \Hspace{1/2}.
\end{align}
\end{lemma}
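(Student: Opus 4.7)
The plan is to reduce to the Laplace (zero wavenumber) case and absorb the Helmholtz correction into the compact perturbation, mirroring the strategy used for \cref{lemma:Helmholtz_coercivity1}.

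First, I would split the Green's function as
\begin{equation*}
G(\x,\y) = G_0(\x,\y) + R(\x,\y), \qquad G_0(\x,\y)=\frac{1}{4\uppi|\x-\y|}, \qquad R(\x,\y)=\frac{\e^{\ii k|\x-\y|}-1}{4\uppi|\x-\y|}.
\end{equation*}
The remainder $R$ extends to a real-analytic function of $\x,\y\in\RR^3$, because $\e^{\ii k r}-1=\ii k r\sum_{j\geq 0}(\ii k r)^j/(j{+}1)!$ cancels the singularity. This induces a decomposition $\bopW = \bopW_0 + \bopW_R$, where $\bopW_0$ is the hypersingular operator for $-\Delta$.

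Next, I would invoke the classical Gårding inequality for the Laplace hypersingular operator on a polyhedral surface (see, e.g., \cite[theorem 6.24]{Stein07}), which provides a compact operator $\bop{T}_0:\Hspace{1/2}\to\Hspace{-1/2}$ such that
\begin{equation*}
\alpha_0\,\Hnorm{1/2}{v}^2 \leqslant \Ltwoinner{\bopW_0 v}{v}+\Ltwoinner{\bop{T}_0 v}{v}\qquad\forall v\in\Hspace{1/2}.
\end{equation*}
Typically $\bop{T}_0$ is the rank-one projection onto constants, which form the kernel of $\bopW_0$.

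It then remains to show that $-\bopW_R:\Hspace{1/2}\to\Hspace{-1/2}$ is compact, so that it can be added to $\bop{T}_0$ to form the desired $\bop{T}_\bop{W}$. Using Maue's integration-by-parts identity, the sesquilinear form of $\bopW_R$ can be written as
\begin{equation*}
\Ltwoinner{\bopW_R v}{w} = \int_\Gamma\!\!\int_\Gamma R(\x,\y)\,\operatorname{curl}_\Gamma v(\y)\cdot\operatorname{curl}_\Gamma w(\x)\dx[\y]\dx[\x] - k^2\!\!\int_\Gamma\!\!\int_\Gamma R(\x,\y)(\vec\nu_\x\cdot\vec\nu_\y)\,v(\y)\,w(\x)\dx[\y]\dx[\x],
\end{equation*}
plus contributions from the already-smooth $-k^2 G_0$ term in the full kernel (which can be absorbed likewise). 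Since $R$ and its tangential derivatives are bounded smooth functions on $\Gamma\times\Gamma$, each double integral defines an integral operator with a smooth kernel, hence mapping into $C^\infty(\Gamma)$ and factoring compactly through, say, $L^2(\Gamma)$; combined with the boundedness of $\operatorname{curl}_\Gamma:\Hspace{1/2}\to\mathbf{H}^{-1/2}(\Gamma)$, this yields compactness of $\bopW_R$ as a map $\Hspace{1/2}\to\Hspace{-1/2}$.

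Setting $\bop{T}_\bop{W}:=\bop{T}_0-\bopW_R$ and $\alpha_\bopW:=\alpha_0$ gives the claimed inequality. The main technical obstacle is the last step: justifying the compactness of $\bopW_R$ carefully, since the hypersingular operator itself involves second-order tangential derivatives and one must use either Maue's identity or a direct distributional argument to see that the Helmholtz correction is smoothing despite being differentiated twice.
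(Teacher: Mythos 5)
Your proof is correct and follows essentially the same strategy the paper invokes: the paper simply cites \cite[theorem 6.40]{Stein07} (the kernel-splitting argument for $\bopV$) and states that the same proof carries over to $\bopW$, which is exactly what you do — decompose $G=G_0+R$, use the Laplace G{\aa}rding inequality for $\bopW_0$, and show the remainder $\bopW_R$ is compact via Maue's identity. You have filled in the details that the paper delegates to the citation, including the correct observation that both the $R$-contributions and the weakly singular $-k^2 G_0$ single-layer-type term are compact as maps $\Hspace{1/2}\to\Hspace{-1/2}$.
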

\begin{proof}
This follows by applying the proof of \cite[theorem 6.40]{Stein07} to the hypersingular operator.
\end{proof}

The following boundedness results are also known.

\begin{lemma}[Boundedness]\label{lemma:Helmholtz_boundedness}
There exist $C_\bopV,C_\bopK,C_{\bopKadj},C_\bopW>0$ such that
\begin{align}
\text{i)}&&\Hnorm{1/2}{\bopV \mu} &\leqslant C_\bopV \Hnorm{-1/2}{\mu} &&\forall \mu \in \Hspace{-1/2},\\
\text{ii)}&&\Hnorm{1/2}{\bopK v} &\leqslant C_\bopK \Hnorm{1/2}{v} &&\forall v \in \Hspace{1/2},\\
\text{iii)}&&\Hnorm{-1/2}{\bopKadj \mu} &\leqslant C_{\bopKadj} \Hnorm{-1/2}{\mu} &&\forall \mu \in \Hspace{-1/2},\label{K'bounded}\\
\text{iv)}&&\Hnorm{-1/2}{\bopW v} &\leqslant C_\bopW \Hnorm{1/2}{v} &&\forall v \in \Hspace{1/2}.
\end{align}
\end{lemma}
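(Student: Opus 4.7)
The plan is to reduce each of the four estimates to its classical Laplace counterpart, so that the only genuinely Helmholtz-specific work is to control a smooth remainder. I would split the kernel as $G(\x,\y) = G_0(\x,\y) + R(\x,\y)$ with Laplace kernel $G_0(\x,\y)=1/(4\uppi\seminorm{\x-\y})$ and remainder $R(\x,\y)=(\e^{\ii k\seminorm{\x-\y}}-1)/(4\uppi\seminorm{\x-\y})$. Since $\e^{\ii kr}-1$ has a simple zero at $r=0$, the function $R$ extends to a $C^\infty$ function on $\RR^3\times\RR^3$, and so do all its partial derivatives. Each of the four boundary integral operators therefore decomposes as its Laplace counterpart (obtained by substituting $G_0$ for $G$ in \cref{eq:these_definitions}) plus an integral operator with $C^\infty$ kernel on $\Gamma\times\Gamma$.

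For the Laplace contributions, I would appeal to the classical mapping properties collected in \cite[Ch.\ 6]{Stein07}: boundedness of the single and double layer operators follows from continuity of the corresponding potentials into locally-$H^1$ spaces combined with the trace theorem; boundedness of $\bopKadj$ can then be obtained either by the same argument or by duality to $\bopK$ through the pairing $\Ltwoinner{\cdot}{\cdot}$; and boundedness of $\bopW$ is proved via Maue's integration-by-parts identity, which rewrites the strongly singular kernel in terms of surface curls acting on the weakly singular single layer. For the four smooth-kernel remainders, a Schur test in $L^2(\Gamma)$ combined with Sobolev embedding on the compact Lipschitz manifold $\Gamma$ gives continuity between every pair of trace Sobolev spaces, with operator norms controlled by uniform bounds on the kernel and, where relevant, its normal derivatives.

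The main technical obstacle is the hypersingular estimate iv): the pointwise singularity of $\partial^2 G/(\partial\vec{\nu}_\x \partial\vec{\nu}_\y)$ is too strong to permit a direct kernel estimate, and one must either adopt a careful distributional interpretation of $\bopW$ or pass through Maue's reformulation. Once the Laplace case is set up, the Helmholtz remainder adds only a smoothing perturbation, and the estimates i)--iv) follow by the triangle inequality.
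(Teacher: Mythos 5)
Your decomposition $G = G_0 + R$ and reduction to the Laplace case is exactly the route taken in the reference the paper cites, so in strategy you are aligned with the paper. The key regularity claim is wrong, though. Writing $r = \seminorm{\x - \y}$,
\[
R(\x,\y) = \frac{\e^{\ii k r} - 1}{4\uppi r} = \frac{1}{4\uppi}\left(\ii k - \frac{k^2 r}{2} - \frac{\ii k^3 r^2}{6} + \cdots\right),
\]
and the odd powers of $r$ in this expansion (already the $-k^2 r/2$ term) prevent $R$ from being a smooth function of $\x-\y$: since $\seminorm{\x-\y}$ is not differentiable at $\x=\y$, $\nabla_{\x}R$ is bounded but discontinuous on the diagonal, so $R$ is Lipschitz but not $C^1$, let alone $C^\infty$ on $\RR^3\times\RR^3$. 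Consequently the remainder kernels for $\bopK$, $\bopKadj$, $\bopW$, which involve one or two normal derivatives of $R$, are not smooth either; the hypersingular remainder in particular is only weakly singular, of order $O(r^{-1})$, and your ``Schur test on a $C^\infty$ kernel'' argument does not apply to it.

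The conclusion is still reachable and the overall plan is salvageable: the remainder kernel for $\bopV$ is bounded, those for $\bopK$ and $\bopKadj$ are bounded, and the one for $\bopW$ is $O(r^{-1})$, so each remainder operator is at least as smoothing as the corresponding Laplace part, and the triangle inequality closes the estimate. But you must replace the smooth-kernel argument by a weakly-singular one for $\bopW$, or better, sidestep the kernel-regularity question entirely via Maue's formula for the Helmholtz hypersingular operator, which expresses $\Ltwoinner{\bopW v}{w}$ through $\bopV$ acting on surface curls of $v$ and $w$ together with a lower-order $k^2\bopV$ term; then iv) is an immediate consequence of i) and needs no separate estimate of the hypersingular remainder.
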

\begin{proof}
\cite[sections 6.2--6.5 and 6.9]{Stein07}.
\end{proof}

We define the exterior Calder\'on projector by
\begin{equation}\label{eq:calder}
\bop{C}\exterior:=
\begin{pmatrix}
\tfrac12\bopI+\bopK & -\bopV\\
-\bopW & \tfrac12\bopI-\bopKadj
\end{pmatrix},
\end{equation}
and recall that if $u$ is a solution of \cref{eq:Helmholtz_pm} then it satisfies
\begin{equation}\label{eq:calder_id}
\bop{C}\exterior\begin{pmatrix}\trace[e]{0} u\\\trace[e]{1} u\end{pmatrix}=\begin{pmatrix}\trace[e]{0} u\\\trace[e]{1} u\end{pmatrix}.
\end{equation}

When considering eigenvalues of the Laplacian, we will make use of the interior Calder\'on projector. This is defined by
\begin{equation}\label{eq:calder_interior}
\bop{C}\interior:=
\begin{pmatrix}
\tfrac12\bopI-\bopK & \bopV\\
\bopW & \tfrac12\bopI+\bopKadj
\end{pmatrix}.
\end{equation}
If $u\interior$ is a solution of an interior Helmholtz problem, then 
it satisfies
\begin{equation}\label{eq:calder_id_interior}
\bop{C}\interior\begin{pmatrix}\trace[i]{0} u\interior\\\trace[i]{1} u\interior\end{pmatrix}=\begin{pmatrix}\trace[i]{0} u\interior\\\trace[i]{1} u\interior\end{pmatrix}.
\end{equation}

Taking the product of the exterior identity \cref{eq:calder_id} with two test functions,
we arrive at the following equations.
\begin{align}
\Ltwoinner{\trace[e]{0} u}{\mu} &= \Ltwoinner{(\tfrac12\bopI + \bopK)\trace[e]{0} u}{\mu}
    -\Ltwoinner{\bopV\trace[e]{1}u}{\mu} &&\forall \mu \in \Hspace{-1/2},\label{eq:Calderon_1}\\
\Ltwoinner{\trace[e]{1} u}{v} &= \Ltwoinner{(\tfrac12\bopI - \bopKadj)\trace[e]{1} u}{v} - \Ltwoinner{\bopW \trace[e]{0}u}{v} &&\forall v \in \Hspace{1/2}.\label{eq:Calderon_2}
\end{align}

For a more compact notation, we write $u$ in the place of $\trace[e]{0} u$ and
introduce $\lambda=\trace[e]{1} u$ and the exterior Calder\'on form
\begin{multline}\label{eq:compact_form}
\form{C}\exterior[(u,\lambda),(v,\mu)]:=
\Ltwoinner{(\tfrac12\bopI+\bopK)u}{\mu} - \Ltwoinner{\bopV\lambda}{\mu}\\
+
\Ltwoinner{(\tfrac12\bopI - \bopKadj)\lambda}{v} - \Ltwoinner{\bopW u}{v}.
\end{multline}
We may then rewrite \cref{eq:Calderon_1,eq:Calderon_2} as
\begin{equation}\label{eq:realtion}
\form{C}\exterior[(u,\lambda),(v,\mu)]=\Ltwoinner{u}{\mu}+\Ltwoinner{\lambda}{v}.
\end{equation}

We will also frequently use the multitrace form, defined by
\begin{equation}\label{eq:mult_trace}
\form{A}[(u,\lambda),(v,\mu)]:=
-\Ltwoinner{\bopK u}{\mu} + \Ltwoinner{\bopV\lambda}{\mu}
+
\Ltwoinner{\bopKadj\lambda}{v} + \Ltwoinner{\bopW u}{v},
\end{equation}
and the multitrace form with compact perturbation,
\begin{multline}\label{eq:mult_trace_pert}
\form{A}_\bop{T}[(u,\lambda),(v,\mu)]:=
-\Ltwoinner{\bopK u}{\mu} + \Ltwoinner{(\bopV+\bop{T}_\bopV)\lambda}{\mu}
\\
+
\Ltwoinner{\bopKadj\lambda}{v} + \Ltwoinner{(\bopW+\bop{T}_\bopW) u}{v},
\end{multline}
where $\bop{T}_\bopV$ and $\bop{T}_\bopW$ are the compact operators from \cref{lemma:Helmholtz_coercivity1,lemma:Helmholtz_coercivity2}.
Using \cref{eq:mult_trace}, we may rewrite \cref{eq:realtion} as 
\begin{equation}\label{eq:skewsym_relation}
\form{A}[(u,\lambda),(v,\mu)]=-\tfrac12\Ltwoinner{u}{\mu}-\tfrac12\Ltwoinner{\lambda}{v}.
\end{equation}

To quantify the two traces we introduce the product space 
\[
\productspace{V} := \Hspace{1/2} \times \Hspace{-1/2}.
\]
We also introduce the associated norm
\[
\Vnorm{(v,\mu)}:= \Hnorm{1/2}{v}+\Hnorm{-1/2}{\mu}.
\]

Using the results in \cref{lemma:Helmholtz_coercivity1,lemma:Helmholtz_coercivity2,lemma:Helmholtz_boundedness},
we obtain the continuity and coercivity of $\form{A}$.
\begin{lemma}[Continuity]\label{lemma:cont_cald}
There exists $C>0$ such that
\begin{align*}
\left|\form{A}[(w,\eta),(v,\mu)]\right| &\leqslant C \Vnorm{(w,\eta)}\Vnorm{(v,\mu)}&&\forall(w,\eta),(v,\mu)\in\productspace{V}.
\end{align*}
\end{lemma}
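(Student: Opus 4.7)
The plan is to apply the triangle inequality to split $\form{A}$ into its four duality-pairing terms and bound each one separately using the boundedness results of \cref{lemma:Helmholtz_boundedness} together with the standard duality estimate
\[
\seminorm{\Ltwoinner{w}{\eta}} \leqslant \Hnorm{1/2}{w}\Hnorm{-1/2}{\eta} \qquad \forall\, w\in\Hspace{1/2},\ \eta\in\Hspace{-1/2},
\]
which is immediate from the definition of $\Hnorm{-1/2}{\cdot}$ as a supremum.

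Applied to each term of \cref{eq:mult_trace}, this gives
\begin{align*}
\seminorm{\Ltwoinner{\bopK w}{\mu}} &\leqslant \Hnorm{1/2}{\bopK w}\Hnorm{-1/2}{\mu} \leqslant C_\bopK \Hnorm{1/2}{w}\Hnorm{-1/2}{\mu},\\
\seminorm{\Ltwoinner{\bopV \eta}{\mu}} &\leqslant \Hnorm{1/2}{\bopV \eta}\Hnorm{-1/2}{\mu} \leqslant C_\bopV \Hnorm{-1/2}{\eta}\Hnorm{-1/2}{\mu},\\
\seminorm{\Ltwoinner{\bopKadj \eta}{v}} &\leqslant \Hnorm{1/2}{v}\Hnorm{-1/2}{\bopKadj\eta} \leqslant C_{\bopKadj}\Hnorm{1/2}{v}\Hnorm{-1/2}{\eta},\\
\seminorm{\Ltwoinner{\bopW w}{v}} &\leqslant \Hnorm{1/2}{v}\Hnorm{-1/2}{\bopW w} \leqslant C_\bopW \Hnorm{1/2}{v}\Hnorm{1/2}{w}.
\end{align*}

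Summing these four bounds and noting that each right-hand side is of the form $\Hnorm{\cdot}{\cdot}\Hnorm{\cdot}{\cdot}$ with one factor bounded by $\Vnorm{(w,\eta)}$ and the other by $\Vnorm{(v,\mu)}$, we obtain
\[
\seminorm{\form{A}[(w,\eta),(v,\mu)]} \leqslant (C_\bopK+C_\bopV+C_{\bopKadj}+C_\bopW)\,\Vnorm{(w,\eta)}\Vnorm{(v,\mu)},
\]
so the claim follows with $C := C_\bopK+C_\bopV+C_{\bopKadj}+C_\bopW$. There is no real obstacle here — the argument is a direct assembly of boundedness of the four integral operators with the duality pairing estimate, and the only thing to be mildly careful about is matching the correct trace space to each factor of the pairing (the $\Hspace{1/2}$ factor always being paired with the $\Hspace{-1/2}$ factor).
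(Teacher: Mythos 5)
Your proof is correct and is exactly the argument the paper intends; the paper simply states ``Use \cref{lemma:Helmholtz_boundedness}'' and leaves the term-by-term duality-pairing estimates, which you have spelled out, to the reader.
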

\begin{proof}
Use \cref{lemma:Helmholtz_boundedness}.
\end{proof}

\begin{lemma}[Coercivity]\label{lemma:coerciv_cald}
There exists $\alpha>0$ and compact operators $\bop{T}_\bop{V}:\Hspace{-1/2}\to\Hspace{1/2}$ and $\bop{T}_\bop{W}:\Hspace{1/2}\to\Hspace{-1/2}$ such that
\begin{multline*}
\alpha\left(
\Hnorm{1/2}{v}^2+\Hnorm{-1/2}{\mu}^2
\right)
\leqslant
\form{A}[(v,\mu),(v,\mu)]
+\Ltwoinner{\bop{T}_\bop{W}v}{v}
+\Ltwoinner{\bop{T}_\bop{V}\mu}{\mu}
\\
\forall(v,\mu)\in\productspace{V}.
\end{multline*}
\end{lemma}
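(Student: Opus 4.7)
The plan is to set $(u,\lambda)=(v,\mu)$ in the definition \cref{eq:mult_trace}, obtaining
\[\form{A}[(v,\mu),(v,\mu)]=\Ltwoinner{\bopV\mu}{\mu}+\Ltwoinner{\bopW v}{v}-\Ltwoinner{\bopK v}{\mu}+\Ltwoinner{\bopKadj\mu}{v},\]
and then to control the two diagonal terms by the Gårding inequalities of \cref{lemma:Helmholtz_coercivity1,lemma:Helmholtz_coercivity2}, while showing that the remaining off-diagonal pair contributes only purely imaginary and compact terms that can either be discarded or absorbed on the right-hand side.

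The cross-term handling relies on the symmetry $G(\vec{x},\vec{y})=G(\vec{y},\vec{x})$ of the Helmholtz Green's function. Swapping variable names in $\Ltwoinner{\bopKadj\mu}{v}$ and comparing with $\overline{\Ltwoinner{\bopK v}{\mu}}$ directly at the kernel level yields
\[\Ltwoinner{\bopKadj\mu}{v}-\overline{\Ltwoinner{\bopK v}{\mu}}=\int_\Gamma\int_\Gamma\partial_{\vec{\nu}_{\vec{y}}}\bigl[G(\vec{x},\vec{y})-\overline{G(\vec{x},\vec{y})}\bigr]\,\mu(\vec{x})\overline{v(\vec{y})}\dx[\vec{x}]\dx[\vec{y}].\]
Because $G-\overline{G}=\ii\sin(k|\vec{x}-\vec{y}|)/(2\uppi|\vec{x}-\vec{y}|)$ extends smoothly across the diagonal $\vec{x}=\vec{y}$, the right-hand side defines a bilinear form with smooth kernel and is therefore a compact perturbation on $\productspace{V}$. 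Since $-\Ltwoinner{\bopK v}{\mu}+\overline{\Ltwoinner{\bopK v}{\mu}}$ is purely imaginary, the real part of the full cross-term sum $-\Ltwoinner{\bopK v}{\mu}+\Ltwoinner{\bopKadj\mu}{v}$ reduces to the real part of this compact bilinear form in $(v,\mu)$.

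A Young inequality with a small weight then splits this compact bilinear form into a principal piece, absorbable into $\alpha_\bopV\Hnorm{-1/2}{\mu}^2+\alpha_\bopW\Hnorm{1/2}{v}^2$ from \cref{lemma:Helmholtz_coercivity1,lemma:Helmholtz_coercivity2}, plus a remainder that is compact in $v$ and in $\mu$ separately and can be folded into enlargements of $\bop{T}_\bop{V}$ and $\bop{T}_\bop{W}$. Choosing $\alpha$ to be a suitably small fraction of $\min(\alpha_\bopV,\alpha_\bopW)$ then delivers the stated inequality.

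The main obstacle is precisely this last splitting step: repackaging the mixed bilinear compact form as a sum of single-variable compact perturbations of $\bop{T}_\bop{V}$ and $\bop{T}_\bop{W}$. This requires exploiting the smoothing action of the compact operator, for instance by estimating one factor in a weaker negative-index norm on $\Gamma$ via its compactness, so that Young's inequality distributes the principal and compact contributions asymmetrically in the right way. Once this is in hand, the coercivity estimate is just the assembly of the two scalar Gårding bounds.
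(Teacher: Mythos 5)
Your decomposition and kernel-level analysis of the cross-term are correct, and the approach is essentially the paper's---invoke \cref{lemma:Helmholtz_coercivity1,lemma:Helmholtz_coercivity2} on the diagonal terms and dispose of the cross-terms---but you make explicit what the paper's one-line proof leaves entirely tacit. For Laplace, where the kernel is real, $\bopKadj$ coincides with the Hilbert-space adjoint $\bopK^*$ of $\bopK$, the cross-terms $-\Ltwoinner{\bopK v}{\mu}+\Ltwoinner{\bopKadj\mu}{v}$ are purely imaginary, and nothing survives in the real part. For Helmholtz $\bopKadj$ and $\bopK^*$ differ by precisely the smoothing operator you identify, so a compact cross-remainder does survive and your extra step is a genuine feature of the complex case, not an artefact of your method. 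The ``main obstacle'' you flag, however, is not an obstacle, and neither a small Young weight nor any absorption into the G{\aa}rding constants is needed. Writing $\bop{E}:\Hspace{-1/2}\to\Hspace{1/2}$ for that smoothing remainder operator, the crude estimate
\[
\seminorm{\Ltwoinner{\bop{E}\mu}{v}}\leqslant\Hnorm{1/2}{\bop{E}\mu}\,\Hnorm{-1/2}{v}\leqslant\tfrac12\Hnorm{1/2}{\bop{E}\mu}^2+\tfrac12\Hnorm{-1/2}{v}^2
\]
already yields two single-variable quadratic forms, and \emph{both} are compact: the first because $\bop{E}$ is compact from $\Hspace{-1/2}$ to $\Hspace{1/2}$, the second because the embedding $\Hspace{1/2}\hookrightarrow\Hspace{-1/2}$ is compact on the closed surface $\Gamma$. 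Each is therefore representable as the duality pairing against a compact operator and can simply be added to $\bop{T}_\bopV$ and $\bop{T}_\bopW$; keeping $\alpha=\min(\alpha_\bopV,\alpha_\bopW)$ unchanged then gives the stated inequality, with the right-hand side read as its real part (as in the proof of \cref{lemma:AT_coercive}). With that one clarification your argument is complete.
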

\begin{proof}
Use the coercivity of $\bopV$ and $\bopW$ from \cref{lemma:Helmholtz_coercivity1,lemma:Helmholtz_coercivity2} and let $\alpha=\min(\alpha_\bopW,\alpha_\bopV)$.
\end{proof}

%%%%%%%%%%%%%%%%%%%%%%%%%%%%%%%%%
%%%%%%%%%%%%%%%%%%%%%%%%%%%%%%%%%
%%%%%%%%%%%%%%%%%%%%%%%%%%%%%%%%%
%%%%%%%%%%%%%%%%%%%%%%%%%%%%%%%%%
\section{Derivation of a formulation for Helmholtz Dirichlet problems}\label{sec:derive}
In this section, we derive a formulation for the exterior Helmholtz problem with
non-homogeneous Dirichlet conditions.

As in \cite{BeBuScro18}, we write the boundary condition as
\begin{equation}
R_\Gamma(u,\lambda) = 0,
\end{equation}
and look to solve
\begin{equation}
\form{A}[(u,\lambda),(v,\mu)]=-\tfrac12\Ltwoinner{u}{\mu} - \tfrac12\Ltwoinner{\lambda}{v}+\Ltwoinner{R_\Gamma(u,\lambda)}{\overline{\betaparam_1} v + \overline{\betaparam_2} \mu},\label{eq:multi_bc}
\end{equation}
for some $\betaparam_1,\betaparam_2\in\mathbb{C}$.

\subsection{Dirichlet boundary condition}\label{sec:helmdirichlet}
To impose a Dirichlet boundary condition,
we choose $\betaparam_1 = -\ii\betaparam\D^{1/2}$, $\betaparam_2 = \ii\betaparam\D^{-1/2}$, where $\betaparam\D$ will
be identified with a mesh-dependent penalty parameter, and
\begin{equation}\label{helmeq:Dir_R}
R\D(u,\lambda) := \ii\betaparam\D^{1/2} (g\D-u)
\end{equation}
where $g\D \in \Hspace{1/2}$ is the Dirichlet data.

Inserting this into \cref{eq:multi_bc}, we obtain the formulation
\begin{equation}\label{helmeq:multi_D}
\form{A}[(u,\lambda),(v,\mu)]-\tfrac12\Ltwoinner{u}{\mu} +\tfrac12\Ltwoinner{\lambda}{v}
+\Ltwoinner{\betaparam\D u}{v}=\Ltwoinner{g\D}{\overline{\betaparam\D} v-\mu}.
\end{equation}

This leads us to the following formulation for the Helmholtz Dirichlet problem:
Find
$(u,\lambda) \in \productspace{V}$ such that
\begin{align}\label{eq:abstract_form_helmDir}
\form{A}[(u,\lambda),(v,\mu)]+\form{B}\exterior\D[(u,\lambda),(v,\mu)] &=
\form{L}\exterior\D(v,\mu)&&\forall(v,\mu) \in \productspace{V},
\end{align}
where
\begin{align}\label{eq:operator_helmDirichlet}
\form{B}\exterior\D[(u,\lambda),(v,\mu)]&:=\tfrac12\Ltwoinner{\lambda}{v}-\tfrac12\Ltwoinner{u}{\mu}+\Ltwoinner{\betaparam\D u}{v},\\ 
\label{eq:helmLDir} 
\form{L}\exterior\D(v,\mu) &:=\Ltwoinner{g\D}{\overline{\betaparam\D} v - \mu}.
\end{align}

We now show that a solution of the Helmholtz problem \cref{eq:Helmholtz} is also a solution of this weak formulation.
\begin{proposition}
If $u$ is a solution of \cref{eq:Helmholtz}, then $(\trace[e]{0}u,\trace[e]{1}u)$ is a solution of \cref{eq:abstract_form_helmDir}.
\end{proposition}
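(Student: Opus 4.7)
The plan is to reduce the claim to the Calderón identity and the Dirichlet boundary condition, noting that the added boundary terms are designed precisely so that substituting the exact trace of the solution yields the right-hand side.

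First, since $u$ solves the interior/exterior PDE \cref{eq:Helmholtz_pm} together with the Sommerfeld radiation condition \cref{eq:Helmholtz_infty}, the exterior Calderón projector identity \cref{eq:calder_id} is available for $u$. Testing componentwise against $(v,\mu)\in\productspace{V}$ in the $\Hspace{1/2}$--$\Hspace{-1/2}$ duality, this is exactly \cref{eq:Calderon_1,eq:Calderon_2}, and by the definition of $\form{A}$ in \cref{eq:mult_trace} it is equivalent to the skew-symmetric form of the Calderón relation, \cref{eq:skewsym_relation}, with $u$ and $\lambda := \trace[e]{1}u$ in place of the unknowns. This step is essentially free since it is just a rearrangement of relations already established in \cref{sec:bem}.

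Next I would add $\form{B}\exterior\D[(u,\lambda),(v,\mu)]$, as defined in \cref{eq:operator_helmDirichlet}, to both sides of \cref{eq:skewsym_relation}. On the left we obtain the left-hand side of \cref{eq:abstract_form_helmDir}. On the right the two $-\tfrac12\Ltwoinner{\lambda}{v}$ terms cancel with $+\tfrac12\Ltwoinner{\lambda}{v}$ from $\form{B}\exterior\D$, the $-\tfrac12\Ltwoinner{u}{\mu}$ terms combine to $-\Ltwoinner{u}{\mu}$, and we are left with
\[
-\Ltwoinner{u}{\mu}+\Ltwoinner{\betaparam\D u}{v}.
\]

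Finally, the Dirichlet boundary condition \cref{eq:Laplace_diribc} gives $\trace[e]{0}u=g\D$, so this equals $-\Ltwoinner{g\D}{\mu}+\Ltwoinner{\betaparam\D g\D}{v}$. Using the sesquilinearity of the duality pairing, $\Ltwoinner{\betaparam\D g\D}{v}=\Ltwoinner{g\D}{\overline{\betaparam\D}v}$, and combining the two pairings in the second argument yields $\Ltwoinner{g\D}{\overline{\betaparam\D}v-\mu}=\form{L}\exterior\D(v,\mu)$ by \cref{eq:helmLDir}, which is the desired identity for all test pairs $(v,\mu)\in\productspace{V}$.

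There is no real obstacle here; the statement is a consistency check that the formulation was written down correctly. The only point requiring any care is keeping track of where the complex conjugation lands when moving scalars between the two arguments of $\Ltwoinner{\cdot}{\cdot}$, and ensuring that the traces $\trace[e]{0}u\in\Hspace{1/2}$ and $\trace[e]{1}u\in\Hspace{-1/2}$ are meaningful, which follows from $u\in\Hlocopspace[\Omega\exterior]{\Delta}{1}$ together with the mapping properties of the exterior traces stated at the start of \cref{sec:bem}.
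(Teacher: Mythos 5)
Your proof is correct and takes essentially the same route as the paper: start from the skew-symmetric Calder\'on relation \cref{eq:skewsym_relation}, add $\form{B}\exterior\D$ so that the $\tfrac12\Ltwoinner{\lambda}{v}$ terms cancel and the $\tfrac12\Ltwoinner{u}{\mu}$ terms combine, then substitute the Dirichlet data via \cref{eq:Laplace_diribc} and use sesquilinearity to identify the result with $\form{L}\exterior\D$. The only difference is that you spell out why the Calder\'on identity applies, which the paper treats as already established.
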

\begin{proof}
Let $(v, \mu)\in\productspace{V}$.
By \cref{eq:skewsym_relation}, we see that
\begin{align*}
(\form{A}+\form{B}\D\exterior)[(\trace[e]{0}u,\trace[e]{1}u),(v,\mu)]
\hspace{-3.5cm}&\\
&= -\tfrac12\Ltwoinner{\trace[e]{0}u}{\mu}-\tfrac12\Ltwoinner{\trace[e]{1}u}{v}
+\tfrac12\Ltwoinner{\trace[e]{1}u}{v}-\tfrac12\Ltwoinner{\trace[e]{0}u}{\mu}+\Ltwoinner{\betaparam\D\trace[e]{0}u}{v},
\\
&= -\Ltwoinner{\trace[e]{0}u}{\mu}+\Ltwoinner{\betaparam\D\trace[e]{0}u}{v}\\ 
&= \Ltwoinner{\trace[e]{0}u}{\overline{\betaparam\D} v-\mu}.
\end{align*}
Using \cref{eq:Laplace_diribc}, we see that 
\begin{align*}
(\form{A}+\form{B}\D\exterior)[(\trace[e]{0}u,\trace[e]{1}u),(v,\mu)]
&= \Ltwoinner{g\D}{\overline{\betaparam\D} v-\mu}\\
&= \form{L}\exterior\D(v,\mu).
\end{align*}
\end{proof}

To discretise \cref{eq:abstract_form_helmDir}, we introduce a family of conforming, shape regular triangulations of
$\Gamma$, $\{\Th\}_{h>0}$, indexed by the largest element
diameter of the mesh, $h$.
We then consider the following finite element spaces.
\begin{align*}
\Poly^p_h &:= \{v_h \in C^0(\Gamma): v_h\vert_{T} \in \mathbb{P}_p(T) \text{, for every }T\in\Th\},\\
\DPoly^q_h &:= \{v_h \in \Ltwo(\Gamma): v_h\vert_{T} \in \mathbb{P}_q(T) \text{, for every }T\in\Th\},
\end{align*}
where $\mathbb{P}_p(T)$ denotes the space of polynomials of order less
than or equal to $p$ on a triangle $T$,
and $\{\Gamma_i\}_{i=1}^M$ are the polygonal faces of $\Gamma$.
We observe that $\Poly^p_h \subset \Hspace{1/2}$ and $\DPoly^q_h\subset \Ltwo(\Gamma)$.

We let $\productspace{V}_h$ be a discrete product space: in our case, we take this to be equal to either
$\Poly_h^p(\Gamma)\times\DPoly^q_h(\Gamma)$ or $\Poly_h^p(\Gamma)\times\Poly^q_h(\Gamma)$, although a wide range
of other choices are possible.

Using the space $\productspace{V}_h$, we look to solve the discrete problem:
Find $(u_h,\lambda_h) \in \productspace{V}_h$ such that
\begin{align}\label{eq:discrete}
\form{A}[(u_h,\lambda_h),(v_h,\mu_h)]+\form{B}\D\exterior[(u_h,\lambda_h),(v_h,\mu_h)] &=
\form{L}\D\exterior(v_h,\mu_h) &&\forall (v_h,\mu_h) \in \productspace{V}_h.
\end{align}

%%%%%%%%%%%%%%%%%%%%%%%%%%%%%%%%%%%%%%%%%%%%%%%%%%%%%%%%%%%%%%%%%%%%%%%%%%%%%%%%%%%%%%%%%%%%%%%%%%%%%%%%%%%%%%%%%%%%%%%%%%%%%%%%
We define the norm
\[
\Bnorm[\D]{(v,\mu)} := \Vnorm{(v,\mu)} + \seminorm{\betaparam\D}^{1/2} \Ltwonorm[\Gamma]{v}.
\]

We note that since $\betaparam\D$ may be dependent on $h$, this norm is not equivalent to the norm $\Vnorm{\cdot}$
independently of $h$.

%%%%%%%%%%%%%%%%%%%%%%%%%%%%%%%%%%%%%
%%%%%%%%%%%%%% SECTION %%%%%%%%%%%%%%
%%%%%%%%%%%%%%%%%%%%%%%%%%%%%%%%%%%%%
\section{Analysis}\label{sec:analysis}
In this section, we analyse the formulation derived in the previous section.
Throughout the analysis, we will use the following notation.

\begin{definition}
For two quantities $a$ and $b$ that may vary with $h$, we write $a\lesssim b$ if
there is a constant $h_0>0$ and an $h$-independent constant $c\in\RR$
such that $a\leqslant cb$ for all $h<h_0$.
We write $a\eqsim b$ if $a\lesssim b$ and $b\lesssim a$.
\end{definition}

\subsection{Analysis of the continuous problem}
We begin by analysing the continuous problem \cref{eq:abstract_form_helmDir}.
In the same way as we did in \cite{BeBuScro18}, we prove that the form $\form{A}+\form{B}\exterior\D$ is continuous.
%%%%%%%%%%%%%%%%%%%%%%%%%%%%
%%%%%%%%%%%%%%%%%%%%%%%%%%%%
%%%%%%%%%%%%%%%%%%%%%%%%%%%%
\begin{proposition}[Continuity]\label{diri:continuity}
There exists
$M>0$ such that $\forall(w,\eta),(v,\mu) \in \productspace{V}$,
\[\seminorm{\form{A}[(w,\eta),(v,\mu)] +
\form{B}\exterior\D[(w,\eta),(v,\mu)]} \leqslant M\Bnorm[\D]{(w,\eta)}\Bnorm[\D]{(v,\mu)}.
\]
\end{proposition}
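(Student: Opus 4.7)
The plan is to treat the two forms separately and then add.  For $\form{A}$, continuity in the $\Vnorm{\cdot}$ norm is exactly the content of \cref{lemma:cont_cald}, and since $\Vnorm{(v,\mu)}\leqslant\Bnorm[\D]{(v,\mu)}$ by construction of the stronger norm, this immediately yields
\[
|\form{A}[(w,\eta),(v,\mu)]| \;\leqslant\; C\,\Vnorm{(w,\eta)}\Vnorm{(v,\mu)} \;\leqslant\; C\,\Bnorm[\D]{(w,\eta)}\Bnorm[\D]{(v,\mu)}.
\]
So no real work is needed there; everything interesting is in $\form{B}\exterior\D$.

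For $\form{B}\exterior\D$ I would bound each of its three terms in turn. The two skew--symmetric contributions $\tfrac12\Ltwoinner{\eta}{v}$ and $\tfrac12\Ltwoinner{w}{\mu}$ are controlled directly by the $H^{1/2}$--$H^{-1/2}$ duality pairing: each is $\leqslant \tfrac12\Hnorm{1/2}{\cdot}\Hnorm{-1/2}{\cdot}$, and thus each is bounded by $\tfrac12\Vnorm{(w,\eta)}\Vnorm{(v,\mu)}\leqslant\tfrac12\Bnorm[\D]{(w,\eta)}\Bnorm[\D]{(v,\mu)}$. The only term that actually requires the extra piece of the norm is the penalty term $\Ltwoinner{\betaparam\D w}{v}$. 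Here I would interpret the pairing as the $L^2$ inner product (both $w$ and $v$ sit in $H^{1/2}\subset L^2(\Gamma)$), apply Cauchy--Schwarz, and split the coefficient symmetrically, giving
\[
|\Ltwoinner{\betaparam\D w}{v}| \;\leqslant\; |\betaparam\D|\,\Ltwonorm[\Gamma]{w}\Ltwonorm[\Gamma]{v} \;=\; \bigl(|\betaparam\D|^{1/2}\Ltwonorm[\Gamma]{w}\bigr)\bigl(|\betaparam\D|^{1/2}\Ltwonorm[\Gamma]{v}\bigr).
\]
By definition of $\Bnorm[\D]{\cdot}$, each factor is bounded by $\Bnorm[\D]{(w,\eta)}$ and $\Bnorm[\D]{(v,\mu)}$ respectively.

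Summing the four contributions gives the claim with $M$ equal to a fixed linear combination of $C$ from \cref{lemma:cont_cald} and $1$. The only small point to mention is the splitting of the penalty coefficient into two halves, which is the standard trick used to absorb a mesh--dependent weight into a symmetric product of norms; no additional estimates beyond \cref{lemma:Helmholtz_boundedness} and the duality definition of $\Hnorm{-1/2}{\cdot}$ are needed. I do not see any genuine obstacle: everything here is a direct application of continuity of the boundary integral operators plus Cauchy--Schwarz, and this proposition is in essence only a rewrite of \cref{lemma:cont_cald} in the augmented norm $\Bnorm[\D]{\cdot}$.
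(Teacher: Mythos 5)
Your argument is correct and is exactly the one the paper invokes: the paper's proof simply cites \cite[proposition~4.13]{BeBuScro18} with $\seminorm{\betaparam\D}$ replacing $\betaparam\D$, and that proof proceeds by the same decomposition you use, namely continuity of $\form{A}$ from \cref{lemma:cont_cald}, duality estimates for the two skew-symmetric pairings in $\form{B}\exterior\D$, and the symmetric split of $\seminorm{\betaparam\D}^{1/2}$ in the penalty term. Nothing is missing; you have written out the argument the paper leaves implicit.
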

\begin{proof}
This can be proved in the same way as \cite[proposition 4.13]{BeBuScro18}, but with $\seminorm{\betaparam\D}$
in the place of $\betaparam\D$.
\end{proof}

Eigenvalues of the interior Laplacian have an effect on the boundary integral formulation of the exterior problem.
These eigenvalues are defined as follows.

\begin{definition}[Eigenvalues of the Laplacian]
If the interior Laplace problem: Find $u$ such that
\begin{subequations}\label{laplace_diri}
\begin{align}
-\Delta u &= \eigenval\D u &&\text{in }\Omega\interior,\label{l__diri1}\\
u &=0 &&\text{on }\Gamma,
\end{align}
\end{subequations}
has multiple solutions, then $\eigenval\D$ is called
a Dirichlet eigenvalue of the interior Laplacian.

If the interior Laplace problem: Find $u$ such that
\begin{subequations}\label{laplace_neu}
\begin{align}
-\Delta u &= \eigenval\N u &&\text{in }\Omega\interior,\\
\frac{\partial u}{\partial\vec\nu} &=0 &&\text{on }\Gamma,
\end{align}
\end{subequations}
has multiple solutions that differ by more than a constant, then $\eigenval\N$ is called
a Neumann eigenvalue of the interior Laplacian.

If the interior Laplace problem: Find $u$ such that
\begin{subequations}\label{laplace_robin}
\begin{align}
-\Delta u &= \eigenval\R u &&\text{in }\Omega\interior,\\
\frac{\partial u}{\partial\vec\nu} + \betaparam\D u &=0 &&\text{on }\Gamma,
\end{align}
\end{subequations}
has multiple solutions, then we call $\eigenval\R$
a Robin eigenvalue of the interior Laplacian with Robin parameter $\betaparam\D$.
\end{definition}

We now prove some important properties of these eigenvalues.
\begin{lemma}\label{lemma:not_both}
For $l\in\CC$, at most one of the following statements is true:
\begin{itemize}
\item $l$ is a Dirichlet eigenvalue of the interior Laplacian;
\item $l$ is a Neumann eigenvalue of the interior Laplacian;
\item $l$ is a Robin eigenvalue of the interior Laplacian (for some $\betaparam\D\not=0$).
\end{itemize}
\end{lemma}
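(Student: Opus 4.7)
The plan is to argue by contradiction: suppose $l$ satisfies two of the three eigenvalue conditions simultaneously, and derive a contradiction via unique continuation for the Helmholtz equation in $\Omega\interior$.

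First, I would translate each eigenvalue assumption into a statement about the interior Cauchy data of its eigenfunction: a Dirichlet eigenfunction has trace pair $(0, \phi)$ with $\phi \not\equiv 0$, a Neumann eigenfunction has $(\psi, 0)$ with $\psi$ nonconstant, and a Robin eigenfunction with parameter $\betaparam\D$ has $(\chi, -\betaparam\D \chi)$ with $\chi \not\equiv 0$. Substituting each trace pair into the interior Calder\'on identity \cref{eq:calder_id_interior} converts each assumption into kernel conditions on the boundary operators: the Dirichlet case yields $\bopV\phi = 0$ and $(\tfrac12\bopI - \bopKadj)\phi = 0$; the Neumann case yields $\bopW\psi = 0$ and $(\tfrac12\bopI + \bopK)\psi = 0$; the Robin case yields $(\tfrac12\bopI + \bopK + \betaparam\D \bopV)\chi = 0$ together with a companion relation involving $\bopW$ and $\bopKadj$.

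Given two such sets of kernel conditions for the same $l$, my strategy is to form a linear combination $u = a u_1 + b u_2$ of the two interior eigenfunctions and choose $(a, b)$ so that both the Dirichlet and Neumann traces of $u$ vanish on $\Gamma$. To locate such $(a,b)$ I would use the $\Ltwo$-adjoint relation $\Ltwoinner{\bopK v}{\mu} = \Ltwoinner{v}{\bopKadj \mu}$ together with the symmetry of $\bopV$ and $\bopW$, which transform the kernel conditions into orthogonality identities on the paired traces (for instance, in the Dirichlet--Neumann pairing, $\bopK \psi = -\tfrac12 \psi$ tested against $\phi$ and $\bopKadj \phi = \tfrac12 \phi$ tested against $\psi$ force $\Ltwoinner{\psi}{\phi} = 0$). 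Once the full Cauchy data of $u$ vanish on $\Gamma$, unique continuation for the Helmholtz equation in the connected open set $\Omega\interior$ gives $u \equiv 0$, yielding a linear dependence between $u_1$ and $u_2$ that contradicts the nontriviality of the original trace pairs.

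The main obstacle is the Robin case, where the parameter $\betaparam\D \neq 0$ enters the kernel condition asymmetrically and the simple subtraction that handles the Dirichlet--Neumann pair no longer works. Here the construction of the decisive $(a,b)$ has to use the full system of companion relations derived from \cref{eq:calder_id_interior}, and the argument must be set up so that the hypothesis $\betaparam\D \neq 0$ is actually exploited (otherwise the Robin case would degenerate into the Neumann case). I expect this to be the delicate step, requiring a careful pairing between the two Calder\'on rows rather than a one-line orthogonality computation.
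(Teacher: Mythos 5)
Your route is genuinely different from the paper's, and as sketched it has a gap that keeps it from closing. The paper's proof stays entirely at the PDE level: it cites \cite[section 7.6]{Stein07} for the Dirichlet--Neumann exclusion, and then disposes of the two cases involving Robin by an algebraic observation on the boundary conditions themselves --- if $u=0$ and $\frac{\partial u}{\partial\vec\nu}+\betaparam\D u=0$ hold simultaneously on $\Gamma$ then $\frac{\partial u}{\partial\vec\nu}=0$ too, while if $\frac{\partial u}{\partial\vec\nu}=0$ and $\frac{\partial u}{\partial\vec\nu}+\betaparam\D u=0$ hold with $\betaparam\D\neq 0$ then $u=0$ on $\Gamma$. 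Either Robin overlap is thereby folded into the already-excluded Dirichlet--Neumann overlap, and checking the ``differ by more than a constant'' caveat in the Neumann definition is a one-line remark. No Calder\'on identities, operator kernels, orthogonality relations, or unique continuation appear anywhere in that argument.

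The concrete step you rely on does not work as stated. You propose to choose $(a,b)$ so that $u=au_1+bu_2$ has vanishing Cauchy data, but with Cauchy pairs $(0,\phi)$ and $(\psi,0)$ the combination has Cauchy data $(b\psi,a\phi)$, which is $(0,0)$ only when $a=b=0$; that gives the trivial $u\equiv0$ and no contradiction, so the unique-continuation step you want to invoke never becomes available. The orthogonality $\Ltwoinner{\psi}{\phi}=0$ that your adjoint manipulation does yield is a correct consequence of the Calder\'on kernel relations $\bopK\psi=-\tfrac12\psi$ and $\bopKadj\phi=\tfrac12\phi$, but it is not by itself a contradiction, and the sketch never explains how to convert it into one. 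You then explicitly defer the Robin case (``I expect this to be the delicate step, requiring a careful pairing between the two Calder\'on rows''), yet that is exactly where the paper's two-line boundary-condition algebra, and the hypothesis $\betaparam\D\neq0$, carry the whole argument. As written the proposal identifies relevant machinery but does not amount to a proof of the lemma, and it misses the much shorter reduction the paper actually uses.
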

\begin{proof}
By \cite[section 7.6]{Stein07}, we know that a value cannot be both a Dirichlet and a Neumann eigenvalue.

If $l$ is both a Dirichlet and a Robin eigenvalue, then there exist $u_1$ and $u_2$ satisfying
\cref{l__diri1}
whose values on $\Gamma$ satisfy $u_1=u_2=0$ and
\begin{align*}
0
&=\frac{\partial u_1}{\partial\vec\nu} + \betaparam\D u_1
=\frac{\partial u_1}{\partial\vec\nu},\\
0
&=\frac{\partial u_2}{\partial\vec\nu} + \betaparam\D u_2
=\frac{\partial u_2}{\partial\vec\nu}.
\end{align*}
If $u_1$ and $u_2$ differed by a constant, their values on $\Gamma$ would differ by the same constant,
contradicting the boundary conditions $u_1=u_2=0$.
Therefore $l$ is also a Neumann eigenvalue, which is a contradiction.

Similarly, if $l$ is both a Neumann and a Robin eigenvalue, then in the same way we see that $l$ is also a Dirichlet
eigenvalue, leading to a similar contradiction.
\end{proof}

\begin{lemma}\label{lemma:robin_nontrivial}
If $\Im(\betaparam\D)\not=0$, then the interior Laplacian with Robin parameter $\betaparam\D$
has no non-trivial real Robin eigenvalues.
\end{lemma}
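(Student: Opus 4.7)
The plan is a standard Green's identity / imaginary part argument.

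First I would suppose, for contradiction, that $l\in\mathbb{R}$ is a non-trivial Robin eigenvalue with eigenfunction $u\not\equiv 0$ satisfying $-\Delta u = lu$ in $\Omega^i$ and $\partial u/\partial\vec\nu + \beta_D u = 0$ on $\Gamma$. Multiplying the PDE by $\overline{u}$, integrating over $\Omega^i$, and applying Green's first identity (with sufficient regularity for the interior eigenfunction, which is standard), I obtain
\begin{equation*}
\int_{\Omega^i}|\nabla u|^2\,\mathrm{d}x - \int_\Gamma \overline{u}\,\frac{\partial u}{\partial\vec\nu}\,\mathrm{d}s = l\int_{\Omega^i}|u|^2\,\mathrm{d}x.
\end{equation*}
Using the Robin boundary condition to replace $\partial u/\partial\vec\nu$ by $-\beta_D u$, this becomes
\begin{equation*}
\int_{\Omega^i}|\nabla u|^2\,\mathrm{d}x + \beta_D \int_\Gamma |u|^2\,\mathrm{d}s = l\int_{\Omega^i}|u|^2\,\mathrm{d}x.
\end{equation*}

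Next I would take the imaginary part. Since $l\in\mathbb{R}$ and the $L^2$ norms of $u$ and $\nabla u$ are real and nonnegative, every term except $\beta_D\int_\Gamma|u|^2\,\mathrm{d}s$ is real, giving
\begin{equation*}
\Im(\beta_D)\int_\Gamma |u|^2\,\mathrm{d}s = 0.
\end{equation*}
By the hypothesis $\Im(\beta_D)\neq 0$, it follows that $\trace[i]{0}u = 0$ on $\Gamma$. Substituting back into the Robin condition then forces $\trace[i]{1}u = -\beta_D\,\trace[i]{0}u = 0$ as well.

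Finally I would conclude that $u$ vanishes identically. One route is to observe that $u$ satisfies $-\Delta u = lu$ in $\Omega^i$ with both traces zero, so $u$ is simultaneously a Dirichlet eigenfunction and a Neumann eigenfunction for the same value $l$; invoking \cref{lemma:not_both} (whose first bullet is already violated by its own non-coincidence with the second) yields a contradiction unless $u\equiv 0$. Alternatively, extending $u$ by zero outside $\Omega^i$ gives a weak solution of the Helmholtz equation on $\mathbb{R}^3$ with compactly supported data, and unique continuation then delivers $u\equiv 0$. Either way $u$ is trivial, contradicting our assumption. The only subtlety is justifying Green's identity on a polyhedral Lipschitz domain for an eigenfunction of the Laplacian with prescribed Robin trace, but this is standard once one knows $u\in H^1(\Omega^i)$ with $\Delta u\in L^2(\Omega^i)$, which follows from the eigenvalue equation itself.
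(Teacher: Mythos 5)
Your proof is correct and follows essentially the same route as the paper's: test the weak form of the Robin eigenvalue problem against the eigenfunction itself, take the imaginary part to force the Dirichlet trace to vanish, and then invoke \cref{lemma:not_both} to rule out a nonzero eigenfunction. The only cosmetic difference is that the paper concludes directly from the Dirichlet/Robin incompatibility in \cref{lemma:not_both}, whereas you route through the Dirichlet/Neumann clash (or offer unique continuation as an alternative); both close the argument in the same way.
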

\begin{proof}
Let $\betaparam\D\in\CC$ with $\Im(\betaparam\D)\not=0$.
Suppose that $\eigenval\R\in\RR\setminus\{0\}$ is a Robin eigenvalue of the
interior Laplacian with corresponding eigenfunction $u\R$, ie
\begin{align*}
-\Delta u\R &= \eigenval\R u\R &&\text{in }\Omega\interior,\\
\frac{\partial u\R}{\partial\vec\nu} + \betaparam\D u\R &=0 &&\text{on }\Gamma.
\end{align*}
Consider the weak formulation of this problem: Find $u\R\in\Hspace[\Omega]{1}$ such that
\begin{align*}
\Ltwoinner[\Omega]{\nabla u\R}{\nabla v} + \betaparam\D\Ltwoinner{u\R}{v}&=\eigenval\R\Ltwoinner[\Omega]{u\R}{v},
&&\forall v\in\Hspace[\Omega]{1}.
\end{align*}
Taking $v=u\R$ leads to
\begin{align*}
\Ltwonorm[\Omega]{\nabla u\R}^2 + \betaparam\D\Ltwonorm{u\R}^2&=\eigenval\R\Ltwoinner[\Omega]{u\R}{u\R}^2.
\end{align*}
Taking the imaginary part of this gives
\begin{align*}
\Im(\betaparam\D)\Ltwonorm{u\R}^2&=0.
\end{align*}
As $\Im(\betaparam\D)\not=0$, this implies that
$\Ltwonorm{u\R}=0$,
and so $u\R=0$ on $\Gamma$.

This, however, implies that $u\R$ is also a Dirichlet eigenfunction of the Laplacian with eigenvalue
$\eigenval\R$, contradicting \cref{lemma:not_both}. Hence no such real eigenvalue exists.
\end{proof}

We now proceed to prove that the form $\form{A} + \form{B}\exterior\D$ is injective. First we prove this
when $k^2$ is not a Dirichlet eigenvalue. Note that by \cref{lemma:robin_nontrivial}, the assumption that
$k^2$ is not a Robin eigenvalue of the interior Laplacian holds whenever $\Im(\betaparam\D)\not=0$.

\begin{lemma}[Injectivity, part one]\label{diri:injective1}
Let $(v,\mu)\in\productspace{V}$.
If
$k^2$ is not a Dirichlet eigenvalue of the interior Laplacian and
$k^2$ is not a Robin eigenvalue of the interior Laplacian with Robin parameter $\betaparam\D$, 
then $\forall(w,\eta)\in\productspace{V}$
\begin{equation*}
\form{A}[(v,\mu),(w,\eta)] + \form{B}\exterior\D[(v,\mu),(w,\eta)] = 0
\end{equation*}
implies that $(v,\mu)=0$.
\end{lemma}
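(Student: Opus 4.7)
My approach is to reduce the variational identity to conditions on the traces of the Helmholtz representation $u := \popK v - \popV\mu$, which is defined on $\RR^3\setminus\Gamma$ and satisfies the homogeneous Helmholtz equation on both $\Omega\exterior$ and $\Omega\interior$. First I would test with $(w,\eta)=(w,0)$ and with $(w,\eta)=(0,\eta)$ separately and use non-degeneracy of the duality pairing to extract the pair of boundary operator identities
\begin{align*}
(\tfrac12\bopI+\bopK) v - \bopV \mu &= 0,\\
\bopW v + (\tfrac12\bopI+\bopKadj)\mu + \betaparam\D v &= 0.
\end{align*}
Reading off the first row of the exterior Calder\'on projector \cref{eq:calder}, the first of these is exactly $\trace[e]{0}u = 0$.

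The potentials satisfy the Sommerfeld radiation condition, so $u$ is a radiating solution of the exterior Helmholtz equation with vanishing Dirichlet trace; by exterior uniqueness—which holds for all real $k$ independently of any interior eigenvalue hypothesis—$u\equiv 0$ in $\Omega\exterior$, and in particular $\trace[e]{1}u = -\bopW v + (\tfrac12\bopI-\bopKadj)\mu = 0$. Adding this to the second boundary identity cancels the $\bopW$ and $\bopKadj$ contributions and yields the purely algebraic relation $\mu = -\betaparam\D v$.

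I would then transport this information to the interior. The jump relations \cref{eq:and_these_definitions} give $\trace[i]{0}u=-v$ and $\trace[i]{1}u=-\mu=\betaparam\D v$, so $\trace[i]{1}u+\betaparam\D\trace[i]{0}u=0$; hence $u$ is an interior Helmholtz solution satisfying a homogeneous Robin condition with parameter $\betaparam\D$, and the hypothesis that $k^2$ is not such a Robin eigenvalue forces $u\equiv 0$ in $\Omega\interior$, giving $v=0$. Plugging $v=0$ into the first boundary identity reduces it to $\bopV\mu=0$, which is the (common) Dirichlet trace of the single-layer potential $\popV\mu$ from both sides of $\Gamma$. Exterior Dirichlet uniqueness gives $\popV\mu\equiv 0$ in $\Omega\exterior$, while the hypothesis that $k^2$ is not a Dirichlet eigenvalue gives the same in $\Omega\interior$; the Neumann jump $\jump{\trace{1}}\popV=-\bopI$ then produces $\mu=0$. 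The main obstacle is spotting that the algebraic relation $\mu=-\betaparam\D v$ extracted from the exterior analysis is precisely the Robin boundary datum for $u|_{\Omega\interior}$ once the jump terms are accounted for; once this is in place, the two separate eigenvalue hypotheses do exactly the two uniqueness jobs—Robin for $u$ and Dirichlet for $\popV\mu$—needed to conclude $(v,\mu)=0$.
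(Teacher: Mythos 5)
Your proof is correct, and it takes a genuinely different route from the paper's. The paper exploits the hypothesis that $k^2$ is not a Dirichlet eigenvalue to invert $\bopV$ and build an interior companion field $\tilde{p}$ whose traces are $(v,\mu)$; the interior Calder\'on identity then simplifies the second boundary equation to $\mu=-\betaparam\D v$, and Robin uniqueness finishes. You instead work with the combined potential $u=\popK v-\popV\mu$, observe that the first boundary identity is exactly $\trace[e]{0}u=0$, invoke exterior uniqueness (which holds for every real $k$) to kill $u$ in $\Omega\exterior$, and read off $\mu=-\betaparam\D v$ from the vanishing exterior Neumann trace; the jump relations then turn $u|_{\Omega\interior}$ into an interior Robin eigenfunction, and Robin uniqueness concludes. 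Both arguments are sound.

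Two remarks on your version. First, the final paragraph is redundant: once Robin uniqueness gives $u\equiv0$ in $\Omega\interior$, you already have $\mu=-\trace[i]{1}u=0$ by the same jump relation that gave you $v=-\trace[i]{0}u=0$; there is no need to invoke $\bopV\mu=0$ and exterior/interior Dirichlet uniqueness of the single layer. Second --- and this is the more interesting point --- if you drop that redundant step, your argument never uses the hypothesis that $k^2$ is not a Dirichlet eigenvalue. It only uses the Robin hypothesis. In other words, your route directly establishes the combined statement of \cref{diri:injective}, bypassing the case analysis that the paper carries out separately (and at considerably greater length) in \cref{diri:injective2}. That is a genuine simplification worth keeping in mind: the key is that exterior uniqueness for the radiating combined potential replaces invertibility of $\bopV$, and exterior uniqueness costs nothing since it holds unconditionally for real wavenumbers.
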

\begin{proof}
Suppose that $(v,\mu)\in\productspace{V}$ such that
\begin{align}
(\form{A}+\form{B}\exterior\D)[(v,\mu),(w,\eta)]&=0
&&\forall(w,\eta)\in\productspace{V}.\label{injeq:1}
\end{align}

Taking $w=0$ in \cref{injeq:1}, we see that, for all $\eta\in\Hspace{-1/2}$,
\begin{align}
\Ltwoinner{\bopV\mu}{\eta}
&=
\Ltwoinner{(\tfrac12\bopI+\bopK) v}{\eta}.\label{injeq:2}
\end{align}

$k^2$ is not a Dirichlet eigenvalue of the interior Laplacian, so by \cite[section 7.6]{Stein07}
we see that $\bopV$ is invertible, and (as \cref{injeq:2} is a direct boundary integral formulation)
there exists a solution to the interior Helmholtz equation $\tilde{p}\in\Hspace[\Omega\interior]{1}$ such that
\begin{align*}
\trace[i]{0}\tilde{p}&=v,\\
\trace[i]{1}\tilde{p}&=\mu.
\end{align*}

Taking $\eta=0$ in \cref{injeq:1}, we see that, for all $w\in\Hspace{1/2}$,
\begin{align}
0&=
\Ltwoinner{\bopW v}{w}+\Ltwoinner{\betaparam\D v}{w} + \Ltwoinner{(\tfrac12\bopI+\bopKadj) \mu}{w}.\label{injeq:3}
\end{align}
We also know from the second line of the interior Calder\'on identity \cref{eq:calder_id_interior} that
\begin{align}
\Ltwoinner{\bopW v}{w} + \Ltwoinner{(\tfrac12\bopI+\bopKadj) \mu}{w}
&=
\Ltwoinner{\bopW \trace[i]{0}\tilde{p}}{w} + \Ltwoinner{(\tfrac12\bopI+\bopKadj) \trace[i]{1}\tilde{p}}{w}
\\&= \Ltwoinner{\trace[i]{1}\tilde{p}}{w}\\&=\Ltwoinner{\mu}{w},
\end{align}
and so using \cref{injeq:3} we see that
\begin{align}
0&=
\Ltwoinner{\mu}{w}+\Ltwoinner{\betaparam\D v}{w},
\end{align}
and so $\mu=-\betaparam\D v$.

This means that $\tilde{p}$ is the solution of \cref{laplace_robin} with $\eigenval\R=k^2$.
Since $k^2$ is not a Robin eigenvalue, the unique solution of \cref{laplace_robin} is $\tilde{p}=0$,
and so $v=\mu=0$.
\end{proof}

In order to prove injectivity at Dirichlet eigenvalues, we require the following lemma.

\begin{lemma}\label{lemma:surjective}
If $k^2$ is not a Neumann eigenvalue of the interior Laplacian, then the operator $\tfrac12\bopI+\bopKadj$ is surjective.
\end{lemma}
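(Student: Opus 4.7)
The plan is to apply the Fredholm alternative. First I would note that $\tfrac12\bopI+\bopKadj$ is a Fredholm operator of index zero on $\Hspace{-1/2}$ — this is standard for the Helmholtz boundary integral operators on Lipschitz surfaces (\cite[chapter 6]{Stein07}), and follows from a G{\aa}rding-type decomposition of $\bopKadj$ as a compact perturbation of a coercive operator analogous to the decompositions used in \cref{lemma:Helmholtz_coercivity1,lemma:Helmholtz_coercivity2}. Via the $\Hspace{1/2}$--$\Hspace{-1/2}$ duality pairing $\Ltwoinner{\cdot}{\cdot}$, the Fredholm alternative then tells us that surjectivity of $\tfrac12\bopI+\bopKadj$ on $\Hspace{-1/2}$ is equivalent to injectivity of its dual $\tfrac12\bopI+\bopK:\Hspace{1/2}\to\Hspace{1/2}$. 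The task therefore reduces to showing that $\tfrac12\bopI+\bopK$ has trivial kernel.

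To prove injectivity of $\tfrac12\bopI+\bopK$, I would suppose $v\in\Hspace{1/2}$ satisfies $(\tfrac12\bopI+\bopK)v=0$ and consider the double-layer potential $u:=\popK v$. By \cref{eq:helm_doub_rec}, $u$ solves the Helmholtz equation in $\RR^3\setminus\Gamma$, and as a double-layer potential it satisfies the Sommerfeld radiation condition in $\Omega\exterior$. Since $\trace[e]{0}u = (\tfrac12\bopI+\bopK)v = 0$, uniqueness of the exterior Dirichlet Helmholtz problem forces $u\equiv 0$ in $\Omega\exterior$, so in particular $\trace[e]{1}u=0$.

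Next I would exploit the jump relations \cref{eq:and_these_definitions}. Continuity of the Neumann trace of $\popK$, namely $\jump{\trace{1}}\popK=0$, yields $\trace[i]{1}u=\trace[e]{1}u=0$, so $u|_{\Omega\interior}$ is an interior Helmholtz solution with vanishing Neumann trace. The hypothesis that $k^2$ is not a Neumann eigenvalue of the interior Laplacian then forces $u\equiv 0$ in $\Omega\interior$ and hence $\trace[i]{0}u=0$. Using $\jump{\trace{0}}\popK=\bopI$ finally gives $v=\trace[e]{0}u-\trace[i]{0}u=0$, which completes the reduction. The main obstacle is the Fredholm-of-index-zero property, which is not established in the excerpt and has to be imported from \cite[chapter 6]{Stein07}; after that, everything is a routine chase through the trace and jump identities already collected in \cref{sec:bem}, mirroring the strategy used in \cref{diri:injective1}.
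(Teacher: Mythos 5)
Your argument is correct, but it follows a genuinely different route from the paper's. The paper applies the closed range theorem (\cite[theorem 2.20]{brezis-book}) to $\bop{F}=\tfrac12\bopI+\bopKadj$: it cites \cite[section 7.6]{Stein07} for the full bijectivity of $\bop{F}'=\tfrac12\bopI+\bopK$ on $\Hspace{1/2}$ when $k^2$ is not a Neumann eigenvalue, and then the open mapping theorem (\cite[corollary 2.7]{brezis-book}) gives boundedness of $\left(\bop{F}'\right)^{-1}$, which is exactly the dual estimate $\Hnorm{1/2}{v}\leqslant c\Hnorm{1/2}{\bop{F}'v}$ needed for surjectivity of $\bop{F}$. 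You instead invoke the Fredholm alternative to reduce surjectivity of $\bop{F}$ to injectivity of $\bop{F}'$, and then prove injectivity of $\tfrac12\bopI+\bopK$ from scratch via the double-layer potential $u=\popK v$, the radiation condition, the jump relations \cref{eq:and_these_definitions}, and the Neumann-eigenvalue hypothesis; that potential-theoretic chase is sound (noting that the paper's definition of a Neumann eigenvalue, ``differ by more than a constant,'' does give $u\equiv 0$ in $\Omega\interior$ since $k\neq 0$ rules out nonzero constants), and it is essentially the argument underlying the Steinbach result the paper cites, so your route unpacks the citation rather than invoking it. The trade-off is that you must import the Fredholm-of-index-zero property for $\tfrac12\bopI+\bopKadj$, and your stated reason for it is inaccurate: there is no G{\aa}rding inequality for $\bopKadj$ in the sense of \cref{lemma:Helmholtz_coercivity1,lemma:Helmholtz_coercivity2} (it is not a symmetric positive-type operator), and on the polyhedral --- hence merely Lipschitz --- boundary $\Gamma$ of this paper $\bopKadj$ is not even compact, so ``identity plus compact'' is not available either. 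The index-zero claim actually rests on the Rellich-identity/Verchota theory for the Laplace adjoint double layer together with compactness of the Helmholtz--Laplace difference, which is a comparable amount of external input to the unique-solvability reference the paper leans on; so neither proof is self-contained, but the paper's is the shorter route given the reference available.
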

\begin{proof}
We prove this lemma using \cite[theorem 2.20]{brezis-book}. This states that if a linear operator $\bop{F}:\Hspace{-1/2}\to\Hspace{-1/2}$
is densely defined and closed, then $\bop{F}$ is surjective if and only if
\begin{align*}\Hnorm{1/2}{v}&\leqslant c\Hnorm{1/2}{\bop{F}'v}&&\forall v\in\Hspace{1/2}.\end{align*}

Let $\bop{F}=\tfrac12\bopI+\bopK'$.
$\bop{F}$ is (trivially) densely defined. Using \cref{K'bounded}, we see that for all $v\in\Hspace{1/2}$
\begin{align*}
\Hnorm{1/2}{\bop{F}v}
&\leqslant \Hnorm{1/2}{\tfrac12v}+\Hnorm{1/2}{\bopKadj v}
\\
&\leqslant (\tfrac12+C_{\bopKadj})\Hnorm{1/2}{v},
\end{align*}
and so $\bop{F}$ is bounded. This implies that $\bop{F}$ is closed.

Taking the adjoint, we see that $\bop{F}'=\tfrac12\bopI+\bopK$. In \cite[section 7.6]{Stein07}, it is shown that if $k^2$ is not a Neumann eigenvalue of the interior Laplacian, then
the equation \[\bop{F}'v=g\] has a unique solution for any $g\in\Hspace{1/2}$. This implies that the operator $\left(\bop{F}'\right)^{-1}$ is well-defined.
As $\bop{F}$ is bounded, it follows from \cite[corollary 2.7]{brezis-book} that $\left(\bop{F}'\right)^{-1}$ is also bounded. Hence,
\begin{align*}
\Hnorm{1/2}{v}
&=\Hnorm{1/2}{\left(\bop{F}'\right)^{-1}g}\\
&\leqslant c\Hnorm{1/2}{g}\\
&=
c\Hnorm{1/2}{\bop{F}'v}.
\end{align*}

We therefore conclude by \cite[theorem 2.20]{brezis-book} that $\bop{F}$ is surjective.
\end{proof}

We can now prove that $\form{A} + \form{B}\exterior\D$ is injective when $k^2$ is a Dirichlet eigenvalue.
\begin{lemma}[Injectivity, part two]\label{diri:injective2}
Let $(v,\mu)\in\productspace{V}$.
If $k^2$ is a Dirichlet eigenvalue of the interior Laplacian, 
then $\forall(w,\eta)\in\productspace{V}$
\begin{equation*}
\form{A}[(v,\mu),(w,\eta)] + \form{B}\exterior\D[(v,\mu),(w,\eta)] = 0
\end{equation*}
implies that $(v,\mu)=0$.
\end{lemma}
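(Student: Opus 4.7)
The plan is to test the hypothesis against two special choices of test pair to extract two algebraic identities on $(v,\mu)$, assemble an auxiliary function via layer potentials whose exterior Dirichlet trace is forced to vanish, and then exploit the jump relations together with the interior Calder\'on identity to conclude that this auxiliary function is an interior Robin eigenfunction at $k^2$. Since by \cref{lemma:not_both} no value can simultaneously be a Dirichlet and a Robin eigenvalue of the interior Laplacian, this will contradict the hypothesis that $k^2$ is a Dirichlet eigenvalue unless the auxiliary function vanishes, which forces $v=\mu=0$.

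Concretely, testing with $(w,\eta)=(0,\eta)$ in the hypothesis gives $\bopV\mu=(\tfrac12\bopI+\bopK)v$ in $\Hspace{1/2}$, while testing with $(w,\eta)=(w,0)$ gives $\bopW v+(\tfrac12\bopI+\bopKadj)\mu+\betaparam\D v=0$ in $\Hspace{-1/2}$. I then introduce
\[
u_*(\x):=(\popK v)(\x)-(\popV\mu)(\x),\qquad \x\in\RR^3\setminus\Gamma.
\]
Because $u_*$ is assembled from the Helmholtz Green's function, $u_*|_{\Omega\exterior}$ solves the Helmholtz equation and satisfies the Sommerfeld radiation condition; combined with $\trace[e]{0}\popK=\tfrac12\bopI+\bopK$ and $\trace[e]{0}\popV=\bopV$, the first identity above reads $\trace[e]{0}u_*=0$. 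Uniqueness for the exterior Helmholtz Dirichlet problem then forces $u_*\equiv 0$ in $\Omega\exterior$, so $\trace[e]{1}u_*=0$ as well.

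Using the jump relations \cref{eq:and_these_definitions} for $\popK v-\popV\mu$ now yields $\trace[i]{0}u_*=-v$ and $\trace[i]{1}u_*=-\mu$. Applying the second row of the interior Calder\'on identity \cref{eq:calder_id_interior} to $u_*|_{\Omega\interior}$ gives $\bopW v+(\tfrac12\bopI+\bopKadj)\mu=\mu$; subtracting this from the second identity above produces $\mu=-\betaparam\D v$, equivalently $\trace[i]{1}u_*+\betaparam\D\trace[i]{0}u_*=0$. Thus $u_*|_{\Omega\interior}$ is an interior Robin eigenfunction with eigenvalue $k^2$ and Robin parameter $\betaparam\D$. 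Invoking \cref{lemma:not_both}, the assumption that $k^2$ is a Dirichlet eigenvalue precludes it from being a Robin eigenvalue (since $\betaparam\D\neq 0$), so $u_*\equiv 0$ in $\Omega\interior$ and reading off the traces gives $v=\mu=0$.

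The main delicate point is the sign and averaging bookkeeping when translating the two identities into vanishing exterior traces of the potential combination $u_*$, and the tacit reliance on $\betaparam\D\neq 0$ to make \cref{lemma:not_both} exclude the Robin case. A variant argument would instead use \cref{lemma:surjective}: since $k^2$ is not a Neumann eigenvalue (again by \cref{lemma:not_both}), $\tfrac12\bopI+\bopKadj$ is bijective, which allows one to construct an interior Helmholtz solution with prescribed Neumann trace $\mu$ directly and then run the same Robin-eigenfunction argument without invoking exterior Dirichlet uniqueness.
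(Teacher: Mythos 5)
Your proof is correct and takes a genuinely different route from the paper's. You define $u_*=\popK v-\popV\mu$ on $\RR^3\setminus\Gamma$, use the first tested identity to get $\trace[e]{0}u_*=0$, invoke \emph{exterior} uniqueness (via the Sommerfeld condition) to conclude $u_*\equiv 0$ in $\Omega\exterior$, and then read both interior traces $\trace[i]{0}u_*=-v$, $\trace[i]{1}u_*=-\mu$ directly from the jump relations \cref{eq:and_these_definitions}. The second tested identity plus the second row of the interior Calder\'on identity then give $\mu=-\betaparam\D v$, so $u_*|_{\Omega\interior}$ is a Robin eigenfunction at $k^2$, which \cref{lemma:not_both} excludes. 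The paper, by contrast, works entirely within $\Omega\interior$ with $\tilde p=\popV\mu-\popK v$ ($=-u_*$) and never invokes exterior uniqueness; because $\bopV$ is not injective when $k^2$ is a Dirichlet eigenvalue, the interior Calder\'on identity only yields $\bopV\trace[i]{1}\tilde p=\bopV\mu$, not $\trace[i]{1}\tilde p=\mu$, so the paper must construct an auxiliary correction $\tilde q=\popV f$ with $f$ solving $(\tfrac12\bopI+\bopKadj)f=\mu-\trace[i]{1}\tilde p$ (using the surjectivity of \cref{lemma:surjective}), prove $\trace[i]{0}\tilde q=0$ via injectivity of $\tfrac12\bopI+\bopK$, and finally run the Robin argument on $\tilde p+\tilde q$. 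Your argument buys a considerably shorter proof with no auxiliary function, at the price of adding one extra ingredient: the uniqueness of the exterior Dirichlet problem together with the (standard, but here unstated) fact that the layer potentials satisfy the radiation condition. One small caveat you already flag: the Robin-eigenvalue exclusion in \cref{lemma:not_both} is stated for $\betaparam\D\neq 0$, which is implicit in the whole setup (the penalty parameter is nonzero), and if $\betaparam\D=0$ the Robin problem degenerates to Neumann and \cref{lemma:not_both} still applies, so no genuine gap there.
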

\begin{proof}
Suppose that $(v,\mu)\in\productspace{V}$ such that
\begin{align}
(\form{A}+\form{B}\exterior\D)[(v,\mu),(w,\eta)]&=0
&&\forall(w,\eta)\in\productspace{V}.\label{inj2eq:1}
\end{align}

Taking $w=0$ in \cref{inj2eq:1}, we see that, for all $\eta\in\Hspace{-1/2}$,
\begin{align}
\Ltwoinner{\bopV\mu}{\eta}
&=
\Ltwoinner{(\tfrac12\bopI+\bopK) v}{\eta}.\label{inj2eq:2}
\end{align}

As $k^2$ is a Dirichlet eigenvalue of the interior Laplacian, then we see by \cref{lemma:not_both} that
$k^2$ is not a Neumann eigenvalue or a Robin eigenvalue.

As $k^2$ is not a Neumann eigenvalue, we see by \cite[section 7.6]{Stein07} that the equation
\begin{align*}
\Ltwoinner{(\tfrac12\bopI+\bopK)u}{\eta}&=\Ltwoinner{g}{\eta}&&\forall\eta\in\Hspace{-1/2}
\end{align*}
has a unique solution. By \cref{inj2eq:2}, we see that $v$ is the solution of this equation with $g=\bopV\mu$.

Let $\tilde{p}=\popV\mu-\popK v$. By \cref{eq:helm_sing_rec,eq:helm_doub_rec}, we see that
\begin{align}
-\Delta \tilde{p} - k^2\tilde{p} &=0&&\text{in }\Omega\interior.\label{inj2eq:8}
\end{align}
Taking the Dirichlet trace of $\tilde{p}$ and applying \cref{inj2eq:2}, we see that on $\Gamma$,
\begin{align}
\trace[i]{0}\tilde{p}
&=\bopV\mu + (\tfrac12\bopI-\bopK)v
\notag\\
&=(\tfrac12\bopI+\bopK)v + (\tfrac12\bopI-\bopK)v
\notag\\
&=v.\label{inj2eq:6}
\end{align}
We therefore conclude, using the first line of the interior Calder\'on identity \cref{eq:calder_id_interior} and \cref{inj2eq:2}, that
\begin{align}
\bopV\trace[i]{1}\tilde{p}
&=
(\tfrac12\bopI+\bopK)\trace[i]{0}\tilde{p}
\notag\\
&=
(\tfrac12\bopI+\bopK)v
\notag\\
&=
\bopV \mu.\label{inj2eq:10}
\end{align}
Note that as $k^2$ is a Dirichlet eigenvalue, $\bopV$ is not injective so this does not necessarily imply that
$\trace[i]{1}\tilde{p}=\mu$.

As $k^2$ is not a Neumann eigenvalue, we can apply \cref{lemma:surjective} to show that 
the boundary integral equation
\begin{align}\label{inj2eq:16}
(\tfrac12\bopI+\bopKadj)f &= \mu-\trace[i]{1}\tilde{p}.
\end{align}
has at least one solution $f\in\Hspace{-1/2}$.
Let $\tilde{q}=\popV f$. By \cref{eq:helm_sing_rec}, we see that
\begin{align}
-\Delta \tilde{q} - k^2\tilde{q} &=0&&\text{in }\Omega\interior.\label{inj2eq:9}
\end{align}
Taking the Neumann trace of $\tilde{q}$ and using \cref{crossref_this_trace} and \cref{inj2eq:16}, we see that on $\Gamma$,
\begin{align}
\trace[i]{1}\tilde{q}
&=\trace[i]{1}(\popV f)
\notag\\
&=(\tfrac12\bopI+\bopKadj)f
\notag\\
&=\mu-\trace[i]{1}\tilde{p}.\label{inj2eq:14}
\end{align}
We know from the first line of the interior Calder\'on identity \cref{eq:calder_id_interior} that
\begin{align*}
(\tfrac12\bopI+\bopK)\trace[i]{0}\tilde{q}
&=
\bopV\trace[i]{1}\tilde{q}.
\end{align*}
Applying \cref{inj2eq:14} then using \cref{inj2eq:10} gives
\begin{align}
(\tfrac12\bopI+\bopK)\trace[i]{0}\tilde{q}
&=
\bopV(\mu-\trace[i]{1}\tilde{p})
\notag\\
&=
0.\label{inj2eq:12}
\end{align}
By \cite[lemma 7.6]{Stein07}, $\tfrac12\bopI+\bopK$ is injective, so $\trace[i]{0}\tilde{q}=0$.
By the interior Calder\'on identities \cref{eq:calder_id_interior}, this implies that $(\tfrac12\bopI-\bopKadj)\trace[i]{1}\tilde{q}=0$,
and so by \cref{inj2eq:14},
\begin{align}\label{inj2eq:4}
(\tfrac12\bopI-\bopKadj)\trace[i]{1}\tilde{p}& =(\tfrac12\bopI-\bopKadj)\mu.
\end{align}

Using \cref{inj2eq:4} and the second line of the interior Calder\'on identity \cref{eq:calder_id_interior}, we see that, for all $w\in\Hspace{1/2}$,
\begin{align}
\Ltwoinner{\bopW v}{w} + \Ltwoinner{(\tfrac12\bopI+\bopKadj) \mu}{w}
&=
\Ltwoinner{\bopW v}{w}
+ \Ltwoinner{\mu}{w}
- \Ltwoinner{(\tfrac12\bopI-\bopKadj) \mu}{w}
\notag\\
&=
\Ltwoinner{\bopW \trace[i]{0}\tilde{p}}{w}
+ \Ltwoinner{\mu}{w}
- \Ltwoinner{(\tfrac12\bopI-\bopKadj) \trace[i]{1}\tilde{p}}{w}
\notag\\&=
\Ltwoinner{\mu}{w}.\label{inj2eq:5}
\end{align}
Taking $\eta=0$ in \cref{inj2eq:1} and applying \cref{inj2eq:5}, we see that
\begin{align}
0&=
\Ltwoinner{\bopW v}{w}+\Ltwoinner{\betaparam\D v}{w} + \Ltwoinner{(\tfrac12\bopI+\bopKadj) \mu}{w}
\notag\\&=
\Ltwoinner{\mu}{w}+\Ltwoinner{\betaparam\D v}{w},
\label{inj2eq:3}
\end{align}
and so $\mu=-\betaparam\D v$.

Consider $\tilde{p}+\tilde{q}$. From \cref{inj2eq:8,inj2eq:9}, we see that
\begin{align}
-\Delta (\tilde{p}+\tilde{q}) - k^2(\tilde{p}+\tilde{q}) &=0&&\text{in }\Omega\interior.
\end{align}
Using the fact that $\trace[i]{0}\tilde{q}=0$ (as we concluded from \cref{inj2eq:12})
and \cref{inj2eq:6,inj2eq:14}, we see that on $\Gamma$,
\begin{align}
\trace[i]{0}(\tilde{p}+\tilde{q}) &=
\trace[i]{0}\tilde{p}+\trace[i]{0}\tilde{q}
\notag\\
&=v,\label{inj2eq:13}\\
\trace[i]{1}(\tilde{p}+\tilde{q}) &=
\trace[i]{1}\tilde{p}+\trace[i]{1}\tilde{q}
\notag\\
&=\mu,\label{inj2eq:11}
\end{align}
and so, using \cref{inj2eq:3}, we see that
\begin{align}
\frac{\partial (\tilde{p}+\tilde{q})}{\partial\vec\nu} + \betaparam\D (\tilde{p}+\tilde{q})
&= \mu + \betaparam\D v
\notag\\
&=0.
\end{align}
Therefore $\tilde{p}+\tilde{q}$ is solution of the Robin problem \cref{laplace_robin}. Since
$k^2$ is not a Robin eigenvalue, we see that $\tilde{p}+\tilde{q}=0$ in $\Omega\interior$,
and so by \cref{inj2eq:13,inj2eq:11} $v=\mu=0$.
\end{proof}

We now combine the previous two results.

\begin{proposition}[Injectivity]\label{diri:injective}
Let $(v,\mu)\in\productspace{V}$.
If
$k^2$ is not a Robin eigenvalue of the interior Laplacian with Robin parameter $\betaparam\D$, 
then $\forall(w,\eta)\in\productspace{V}$
\begin{equation*}
\form{A}[(v,\mu),(w,\eta)] + \form{B}\exterior\D[(v,\mu),(w,\eta)] = 0
\end{equation*}
implies that $(v,\mu)=0$.
\end{proposition}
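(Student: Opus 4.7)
The plan is to reduce the claim to the two previously established injectivity lemmas via a case split on whether $k^2$ is a Dirichlet eigenvalue of the interior Laplacian. The hypothesis of this proposition is strictly weaker than the combined hypotheses used separately in \cref{diri:injective1,diri:injective2}, so no new analytic work is required; the entire proof is a short dispatch argument.

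First I would handle the case in which $k^2$ is \emph{not} a Dirichlet eigenvalue of the interior Laplacian. Combined with the standing assumption that $k^2$ is not a Robin eigenvalue with parameter $\betaparam\D$, both hypotheses of \cref{diri:injective1} are in force, so that lemma immediately yields $(v,\mu)=0$.

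Next I would handle the complementary case in which $k^2$ \emph{is} a Dirichlet eigenvalue. In that case \cref{diri:injective2} applies directly — its sole hypothesis is exactly this — and again gives $(v,\mu)=0$. As a sanity check one may observe that, by \cref{lemma:not_both}, a Dirichlet eigenvalue cannot simultaneously be a Robin eigenvalue, so the proposition's hypothesis is automatically consistent in this branch, although it is not actively used.

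Since the two cases are exhaustive and each concludes $(v,\mu)=0$, the proof is complete. There is no real obstacle here: the only subtlety worth flagging is that \cref{diri:injective2} deliberately did not list ``$k^2$ is not a Robin eigenvalue'' among its hypotheses (it is automatic via \cref{lemma:not_both}), which is precisely what allows us to drop the non-Dirichlet-eigenvalue hypothesis from \cref{diri:injective1} when merging the two results into the present statement.
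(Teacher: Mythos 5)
Your proof is correct and matches the paper's approach exactly: the paper's one-line proof ``Combine \cref{diri:injective1,diri:injective2}'' is precisely the case split you spell out. Your remark about why \cref{diri:injective2} needs no Robin-eigenvalue hypothesis (via \cref{lemma:not_both}) is an accurate reading of the situation.
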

\begin{proof}
Combine \cref{diri:injective1,diri:injective2}.
\end{proof}

Using \cref{lemma:robin_nontrivial}, we see that the following corollary holds.
\begin{corollary}
If $\Im(\betaparam\D)\not=0$ and $\forall(w,\eta)\in\productspace{V}$
\begin{equation*}
\form{A}[(v,\mu),(w,\eta)] + \form{B}\exterior\D[(v,\mu),(w,\eta)] = 0,
\end{equation*}
then $(v,\mu)=0$.
\end{corollary}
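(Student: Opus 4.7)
The plan is to derive this corollary as an immediate consequence of combining \cref{diri:injective} with \cref{lemma:robin_nontrivial}. The observation that makes everything work is that, by the problem setup, the wavenumber satisfies $k\in\RR$, hence $k^2$ is a non-negative real number. \Cref{lemma:robin_nontrivial} asserts that whenever $\Im(\betaparam\D)\neq 0$ the interior Laplacian with Robin parameter $\betaparam\D$ admits no non-trivial real Robin eigenvalues, so in particular $k^2$ cannot be such an eigenvalue (the trivial case $k^2=0$ would force the eigenfunction to vanish identically and is therefore excluded from the Helmholtz setting).

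The order of the proof is then the following. First I would invoke \cref{lemma:robin_nontrivial} under the hypothesis $\Im(\betaparam\D)\neq 0$ to conclude that $k^2$ is not a Robin eigenvalue of the interior Laplacian with Robin parameter $\betaparam\D$. Second, with this conclusion in hand, the assumption of \cref{diri:injective} is satisfied, so the hypothesis
\[
\form{A}[(v,\mu),(w,\eta)]+\form{B}\exterior\D[(v,\mu),(w,\eta)]=0\qquad\forall(w,\eta)\in\productspace{V}
\]
yields directly $(v,\mu)=0$.

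Since everything reduces to chaining two already-proved results, there is essentially no obstacle here beyond stating the reduction carefully. The only subtlety worth mentioning is that one should use that $k\in\RR$ (as assumed in the formulation \cref{eq:Helmholtz}) to move from ``not a non-trivial real Robin eigenvalue'' (as provided by \cref{lemma:robin_nontrivial}) to ``not a Robin eigenvalue'' (as required by \cref{diri:injective}); apart from this short remark, the proof is simply a one-line citation of the two preceding results.
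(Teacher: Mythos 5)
Your proof is correct and is essentially identical to the paper's: the paper also obtains the corollary by simply combining \cref{lemma:robin_nontrivial} with \cref{diri:injective}. Your extra aside about the edge case $k^2=0$ (which \cref{lemma:robin_nontrivial} technically does not cover, as it only excludes \emph{non-trivial} real Robin eigenvalues) is a reasonable bit of additional care that the paper leaves implicit, but the overall route is the same.
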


We now prove that the form $\form{A}_\bop{T}+\form{B}\exterior\D$ is coercive, where $\form{A}_\bop{T}$ is
the multitrace form with compact perturbation as defined in \cref{eq:mult_trace_pert}.

\begin{lemma}[coercivity]\label{lemma:AT_coercive}
If $\Re(\betaparam\D)\geqslant0$ then there exists $\alpha>0$ such that for all $(v,\mu)\in\productspace{V}$,
\[
\alpha\Vnorm{(v,\mu)}^2\leqslant\seminorm{(\form{A}_\bop{T}+\form{B}\exterior\D)[(v,\mu),(v,\mu)]}.
\]
\end{lemma}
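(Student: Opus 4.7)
The plan is to expand the quadratic form $(\form{A}_\bop{T}+\form{B}\exterior\D)[(v,\mu),(v,\mu)]$ into its constituent pieces, demonstrate that the $\bopK$-type and identity-type cross terms cancel (or contribute only purely imaginary quantities), apply the G{\aa}rding inequalities \cref{lemma:Helmholtz_coercivity1,lemma:Helmholtz_coercivity2} to the $\bopV$- and $\bopW$-diagonal contributions, and use the hypothesis $\Re(\betaparam\D)\geqslant 0$ to ensure the penalty term is nonnegative. Finally, I would pass from the real part to the absolute value via $|z|\geqslant\Re(z)$ and use $a^2+b^2\geqslant\tfrac12(a+b)^2$ to match the definition of $\Vnorm{\cdot}$.

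Concretely, from \cref{eq:mult_trace_pert,eq:operator_helmDirichlet} I would write
\begin{align*}
(\form{A}_\bop{T}+\form{B}\exterior\D)[(v,\mu),(v,\mu)]
&= \Ltwoinner{(\bopV+\bop{T}_\bopV)\mu}{\mu}+\Ltwoinner{(\bopW+\bop{T}_\bopW)v}{v}+\Ltwoinner{\betaparam\D v}{v}\\
&\quad + \bigl(\Ltwoinner{\bopKadj\mu}{v}-\Ltwoinner{\bopK v}{\mu}\bigr)+\tfrac12\bigl(\Ltwoinner{\mu}{v}-\Ltwoinner{v}{\mu}\bigr).
\end{align*}
Since $\bopKadj$ is the dual of $\bopK$ with respect to the $\Hspace{1/2}$--$\Hspace{-1/2}$ pairing, one has $\Ltwoinner{\bopK v}{\mu}=\Ltwoinner{v}{\bopKadj\mu}$; combined with the reordering symmetry of the pairing consistent with the (anti)linearity properties spelled out in \cref{sec:bem}, both cross-term brackets vanish (at least modulo purely imaginary contributions).

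Taking real parts and invoking \cref{lemma:Helmholtz_coercivity1,lemma:Helmholtz_coercivity2} then yields
\[
\Re\bigl((\form{A}_\bop{T}+\form{B}\exterior\D)[(v,\mu),(v,\mu)]\bigr)
\geqslant \alpha_\bopV\Hnorm{-1/2}{\mu}^2+\alpha_\bopW\Hnorm{1/2}{v}^2+\Re(\betaparam\D)\Ltwonorm{v}^2,
\]
where the last term is nonnegative by hypothesis. Using $|z|\geqslant\Re(z)$ together with $\Hnorm{1/2}{v}^2+\Hnorm{-1/2}{\mu}^2\geqslant\tfrac12\Vnorm{(v,\mu)}^2$ and setting $\alpha:=\tfrac12\min(\alpha_\bopV,\alpha_\bopW)$ delivers the required bound. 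The main obstacle I anticipate is making the cancellation of the cross terms fully rigorous, which rests on careful bookkeeping of the duality-pairing conventions and the defining property of $\bopKadj$; once that is pinned down, the remainder of the argument is a direct assembly of results already collected in \cref{sec:bem}.
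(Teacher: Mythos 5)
Your proposal follows essentially the same path as the paper's own proof: expand $(\form{A}_\bop{T}+\form{B}\exterior\D)$, drop the cross terms as contributing nothing to the real part, apply the two G{\aa}rding inequalities, discard the nonnegative penalty contribution, and finish with $\seminorm{c}\geqslant\Re(c)$. One small caution: the identity $\Ltwoinner{\bopK v}{\mu}=\Ltwoinner{v}{\bopKadj\mu}$ that you invoke is the \emph{transpose} relation, which for the complex Helmholtz kernel does not coincide with the Hermitian adjoint under the sesquilinear pairing the paper uses, so the cancellation of the $\bopK$/$\bopKadj$ cross terms deserves a more careful argument than the one you sketch --- though the paper itself simply asserts this cancellation without elaboration, so your level of rigour matches the source.
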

\begin{proof}
Using the definitions of $\form{A}_\bop{T}$ and $\form{B}\exterior\D$, we see that
\begin{align*}
\Re((\form{A}_\bop{T}+\form{B}\exterior\D)[(v,\mu),(v,\mu)])
\hspace{-2cm}&\\
&=
\Re(
-\Ltwoinner{\bopK v}{\mu} + \Ltwoinner{(\bopV+\bop{T}_\bopV)\mu}{\mu}
+\Ltwoinner{\bopKadj\mu}{v}
\\&\hspace{1cm}
+ \Ltwoinner{(\bopW+\bop{T}_\bopW) v}{v}
+\tfrac12\Ltwoinner{\mu}{v}-\tfrac12\Ltwoinner{v}{\mu}+\Ltwoinner{\betaparam\D v}{v}
)
\\
&=
\Ltwoinner{(\bopV+\bop{T}_\bopV)\mu}{\mu}
+ \Ltwoinner{(\bopW+\bop{T}_\bopW) v}{v}
+\Ltwoinner{\Re(\betaparam\D) v}{v}.
\end{align*}
Applying \cref{lemma:Helmholtz_coercivity1,lemma:Helmholtz_coercivity2} and using the assumption
that $\Re(\betaparam\D)\geqslant0$, we see that
\begin{align*}
\Re((\form{A}_\bop{T}+\form{B}\exterior\D)[(v,\mu),(v,\mu)])
&\geqslant
\alpha_\bopV \Hnorm{-1/2}{\mu}^2
+\alpha_\bopW \Hnorm{1/2}{v}^2
+\Re(\betaparam\D)\Ltwonorm{v}^2
\\
&\geqslant
\alpha_\bopV \Hnorm{-1/2}{\mu}^2
+\alpha_\bopW \Hnorm{1/2}{v}^2
\\
&\geqslant
\alpha \Vnorm{(v,\mu)}^2,
\end{align*}
for some $\alpha>0$.

The result then follows from that fact that for any $c\in\CC$, $\seminorm{c}\geqslant\Re(c)$.
\end{proof}

We conclude our analysis of the continuous problem by showing that it is well-posed.

\begin{proposition}\label{lemma:c_unique}
If $\Re(\betaparam\D)>0$ and
$k^2$ is not a Robin eigenvalue of the interior Laplacian with Robin parameter $\betaparam\D$, 
then
the continuous problem \cref{eq:abstract_form_helmDir} has a unique solution.
\end{proposition}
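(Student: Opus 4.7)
The strategy is a Fredholm alternative argument: I would recognize the form $\form{A}+\form{B}\exterior\D$ as a compact perturbation of the continuous and coercive form $\form{A}_\bop{T}+\form{B}\exterior\D$, and conclude invertibility from the injectivity already established in \cref{diri:injective}.

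First I would verify that $\form{A}_\bop{T}+\form{B}\exterior\D$ fits the Lax--Milgram framework on the complex Hilbert space $\productspace{V}$. Continuity of $\form{A}+\form{B}\exterior\D$ is \cref{diri:continuity}, and the extra compact terms in $\form{A}_\bop{T}$ are themselves bounded, so $\form{A}_\bop{T}+\form{B}\exterior\D$ is continuous as well. Coercivity under the hypothesis $\Re(\betaparam\D)>0$ (in particular $\Re(\betaparam\D)\geqslant 0$) is \cref{lemma:AT_coercive}, whose proof delivers a lower bound on $\Re((\form{A}_\bop{T}+\form{B}\exterior\D)[(v,\mu),(v,\mu)])$. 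The Lax--Milgram lemma for sesquilinear forms then yields a bounded invertible operator $\mathcal{M}_\bop{T}\colon\productspace{V}\to\productspace{V}^*$ associated with $\form{A}_\bop{T}+\form{B}\exterior\D$.

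Next, directly from the definitions \cref{eq:mult_trace,eq:mult_trace_pert},
\[
\form{A}_\bop{T}[(v,\mu),(w,\eta)]-\form{A}[(v,\mu),(w,\eta)]
=\Ltwoinner{\bop{T}_\bopV \mu}{\eta}+\Ltwoinner{\bop{T}_\bopW v}{w}.
\]
Since $\bop{T}_\bopV$ and $\bop{T}_\bopW$ are compact by \cref{lemma:Helmholtz_coercivity1,lemma:Helmholtz_coercivity2}, this difference defines a compact operator $\mathcal{K}\colon\productspace{V}\to\productspace{V}^*$. Consequently the operator $\mathcal{M}=\mathcal{M}_\bop{T}-\mathcal{K}$ associated with $\form{A}+\form{B}\exterior\D$ is a compact perturbation of an isomorphism, hence Fredholm of index zero.

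By the Fredholm alternative, $\mathcal{M}$ is surjective if and only if it is injective. Injectivity is precisely \cref{diri:injective}, whose hypotheses hold because $k^2$ is not a Robin eigenvalue of the interior Laplacian with parameter $\betaparam\D$. Therefore $\mathcal{M}$ is bijective and \cref{eq:abstract_form_helmDir} admits a unique solution. The main obstacle was already handled in the injectivity analysis, especially \cref{diri:injective2} where $k^2$ is a Dirichlet eigenvalue of the interior Laplacian; what remains at this stage is a routine synthesis via Fredholm theory.
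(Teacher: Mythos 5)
Your proposal is correct and follows essentially the same route as the paper: the paper's one-line proof invokes Steinbach's Theorem~3.15, which is precisely the Fredholm-alternative/G{\aa}rding argument you spell out (G{\aa}rding coercivity from \cref{lemma:AT_coercive} makes the operator a compact perturbation of an isomorphism, hence Fredholm of index zero, and injectivity from \cref{diri:injective} then gives bijectivity). You have simply unpacked the cited theorem rather than referencing it.
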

\begin{proof}
As shown in \cite[theorem 3.15]{Stein07}, this is a direct consequence of \cref{diri:injective,lemma:AT_coercive}.
\end{proof}

\subsection{Analysis of the discrete problem}

We can now use \cite[theorem 4.2.9]{sauter-schwab} to show that the discrete problem \cref{eq:discrete} is well-posed,
and prove a quasi-optimal error estimate.

\begin{proposition}
If
$k^2$ is not a Robin eigenvalue of the interior Laplacian with Robin parameter $\betaparam\D$, 
then
there exists $h_0\in\RR$ such that for all $h<h_0$,
the discrete problem \cref{eq:discrete} has a unique solution.
\end{proposition}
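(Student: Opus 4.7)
The plan is to invoke \cite[theorem 4.2.9]{sauter-schwab}, which provides well-posedness of a conforming Galerkin discretization for any bounded sesquilinear form that can be written as the sum of a coercive form and a compact perturbation, provided the corresponding continuous problem has a unique solution and the discrete spaces satisfy the standard density property.

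First I would verify the hypotheses of that theorem for the form $\form{A}+\form{B}\exterior\D$ on $\productspace{V}\times\productspace{V}$. Continuity is already \cref{diri:continuity}. For a G\aa rding-type decomposition, I would use the identity
\[
(\form{A}+\form{B}\exterior\D)[(v,\mu),(w,\eta)]
= (\form{A}_\bop{T}+\form{B}\exterior\D)[(v,\mu),(w,\eta)]
-\Ltwoinner{\bop{T}_\bopV\mu}{\eta}-\Ltwoinner{\bop{T}_\bopW v}{w},
\]
where the first summand is coercive on $\productspace{V}$ by \cref{lemma:AT_coercive} (the standing hypothesis $\Re(\betaparam\D)>0$ inherited from the continuous analysis certainly implies $\Re(\betaparam\D)\ge 0$), and the last two terms define a compact sesquilinear form on $\productspace{V}\times\productspace{V}$ because $\bop{T}_\bopV$ and $\bop{T}_\bopW$ are compact operators between the corresponding trace spaces by \cref{lemma:Helmholtz_coercivity1,lemma:Helmholtz_coercivity2}. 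Injectivity of the continuous operator, which is the remaining ingredient, is exactly \cref{diri:injective} under the hypothesis that $k^2$ is not a Robin eigenvalue with Robin parameter $\betaparam\D$.

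Next I would check the approximation property for the chosen discrete product space $\productspace{V}_h$: for every $(v,\mu)\in\productspace{V}$,
\[
\inf_{(v_h,\mu_h)\in\productspace{V}_h}\Vnorm{(v-v_h,\mu-\mu_h)}\longrightarrow 0\quad\text{as }h\to 0,
\]
which is standard for the conforming piecewise-polynomial choices $\Poly^p_h\times\DPoly^q_h$ and $\Poly^p_h\times\Poly^q_h$ on a shape-regular family $\{\Th\}_{h>0}$, together with well-posedness of the continuous problem \cref{eq:abstract_form_helmDir} from \cref{lemma:c_unique}. With these in hand, \cite[theorem 4.2.9]{sauter-schwab} delivers an $h_0>0$ and, for every $h<h_0$, existence of a unique discrete solution; uniqueness alone suffices here because the problem is square.

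There is no deep step: the main (minor) obstacle is merely bookkeeping, namely checking that the penalty term $\Ltwoinner{\betaparam\D u}{v}$ in $\form{B}\exterior\D$ does not destroy the coercive-plus-compact structure. This is handled by the same observation used in \cref{lemma:AT_coercive}: the real part of this term is $\Re(\betaparam\D)\Ltwonorm{v}^2\ge 0$ and may be discarded in the lower bound, so the perturbation introduced by $\form{B}\exterior\D$ is absorbed into the coercive part without extra compact contributions.
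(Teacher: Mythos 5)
Your proposal is correct and follows essentially the same route as the paper: the paper's (terse) proof also invokes \cite[theorem 4.2.9]{sauter-schwab} and points to \cref{lemma:AT_coercive} and \cref{lemma:c_unique} as supplying the required G\aa rding inequality and continuous well-posedness. You merely make explicit the bookkeeping that the paper leaves implicit (the compact-perturbation decomposition via $\bop{T}_\bopV,\bop{T}_\bopW$, continuity from \cref{diri:continuity}, injectivity from \cref{diri:injective}, the density property of $\productspace{V}_h$, and the inherited assumption $\Re(\betaparam\D)>0$).
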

\begin{proof}
From \cref{lemma:AT_coercive,lemma:c_unique}, we see that the assumptions of 
\cite[theorem 4.2.9]{sauter-schwab} are true, hence the result holds.
\end{proof}

To prove our \emph{a priori} error bounds, we will use the following lemma.
\begin{lemma}\label{diri:important_lemma}
Assume that
$\Re(\betaparam\D)>0$,
$h<h_0$ and
$k^2$ is not a Robin eigenvalue of the interior Laplacian with Robin parameter $\betaparam\D$.
Let $(u,\lambda)$ be the solution of \cref{eq:abstract_form_helmDir}, and
let $(u_h,\lambda_h)$ be the solution of \cref{eq:discrete}. These solutions satisfy
\[
\Bnorm[\D]{(u-u_h,\lambda-\lambda_h)} \lesssim
\inf_{(v_h,\mu_h)\in \productspace{V}_h}
\Bnorm[\D]{(u-v_h,\lambda-\mu_h)}.
\]
\end{lemma}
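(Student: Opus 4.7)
The plan is to derive the Céa-type bound from a discrete inf-sup condition, exactly the same framework that was invoked in the preceding proposition for well-posedness of \cref{eq:discrete}. The workhorse is Theorem 4.2.9 of Sauter and Schwab, which for Gårding-type problems simultaneously gives well-posedness and quasi-optimal approximation.

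First I would record Galerkin orthogonality, obtained by subtracting \cref{eq:discrete} from \cref{eq:abstract_form_helmDir} restricted to $\productspace{V}_h$:
\[
(\form{A}+\form{B}\exterior\D)[(u-u_h,\lambda-\lambda_h),(w_h,\eta_h)]=0\qquad\forall(w_h,\eta_h)\in\productspace{V}_h.
\]
Then I would decompose $\form{A}+\form{B}\exterior\D=(\form{A}_\bop{T}+\form{B}\exterior\D)+\form{K}$, with the compact bilinear form $\form{K}[(v,\mu),(w,\eta)]:=-\Ltwoinner{\bop{T}_\bopV\mu}{\eta}-\Ltwoinner{\bop{T}_\bopW v}{w}$ coming from \cref{lemma:Helmholtz_coercivity1,lemma:Helmholtz_coercivity2}. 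The principal part is coercive in $\Bnorm[\D]{\cdot}$ (by \cref{lemma:AT_coercive} together with the $\Re(\betaparam\D)\Ltwonorm{v}^2$ term surfaced in its proof, which absorbs the penalty factor of the norm when $\Re(\betaparam\D)\eqsim\seminorm{\betaparam\D}$), and the full form is continuous in $\Bnorm[\D]{\cdot}$ by \cref{diri:continuity}.

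Next I would invoke Theorem 4.2.9 of Sauter--Schwab: continuity together with the coercive-plus-compact splitting in $\Bnorm[\D]{\cdot}$, injectivity of the continuous problem from \cref{diri:injective}, and conformity of $\productspace{V}_h$ imply the existence of $h_0>0$ and $\gamma>0$ such that for every $h<h_0$ one has the discrete inf-sup
\[
\gamma\Bnorm[\D]{(v_h,\mu_h)}\le\sup_{(w_h,\eta_h)\in\productspace{V}_h\setminus\{0\}}\frac{\seminorm{(\form{A}+\form{B}\exterior\D)[(v_h,\mu_h),(w_h,\eta_h)]}}{\Bnorm[\D]{(w_h,\eta_h)}}.
\]
I would then close the argument along standard lines: for arbitrary $(v_h,\mu_h)\in\productspace{V}_h$, apply the inf-sup to $(u_h-v_h,\lambda_h-\mu_h)\in\productspace{V}_h$, use Galerkin orthogonality to rewrite the numerator as $(\form{A}+\form{B}\exterior\D)[(u-v_h,\lambda-\mu_h),(w_h,\eta_h)]$, bound it via continuity by $M\Bnorm[\D]{(u-v_h,\lambda-\mu_h)}\Bnorm[\D]{(w_h,\eta_h)}$, and divide by the denominator; the triangle inequality applied to $(u-u_h,\lambda-\lambda_h)=(u-v_h,\lambda-\mu_h)+(v_h-u_h,\mu_h-\lambda_h)$ and the infimum over $(v_h,\mu_h)$ then deliver the stated estimate.

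The main obstacle is the $h$-dependence of the norm $\Bnorm[\D]{\cdot}$ through the penalty weight $\seminorm{\betaparam\D}^{1/2}$, which is not present in the standard statement of Sauter--Schwab. Obtaining an $h$-uniform inf-sup requires the coercive piece $\form{A}_\bop{T}+\form{B}\exterior\D$ to dominate the penalty part of the norm with an $h$-independent constant; this is precisely what the $\Ltwoinner{\betaparam\D v}{v}$ term built into $\form{B}\exterior\D$ provides when $\Re(\betaparam\D)\eqsim\seminorm{\betaparam\D}$, and it is also what keeps the compact perturbation a genuine lower-order disturbance of the coercive part in the discrete inf-sup derivation.
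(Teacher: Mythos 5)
Your proof takes the same route as the paper's: the paper disposes of this lemma in a single line by citing Sauter--Schwab theorem 4.2.9, which is exactly the theorem you invoke, and the Galerkin orthogonality, splitting $\form{A}+\form{B}\exterior\D=(\form{A}_\bop{T}+\form{B}\exterior\D)+\form{K}$ with compact $\form{K}$, discrete inf-sup, and C\'ea-type chain you spell out is precisely the internal machinery of that theorem. What you add is a legitimate observation that the paper leaves implicit: \cref{lemma:AT_coercive} as stated gives G\aa{}rding coercivity in $\Vnorm{\cdot}$, whereas \cref{diri:continuity} and the conclusion are phrased in the weighted norm $\Bnorm[\D]{\cdot}$, and these two norms are equivalent only with a constant depending on $\seminorm{\betaparam\D}$.

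Two remarks on how you resolve that mismatch. First, your sufficient condition $\Re(\betaparam\D)\eqsim\seminorm{\betaparam\D}$ (which does upgrade the proof of \cref{lemma:AT_coercive} to $\Bnorm[\D]{\cdot}$-coercivity with an $h$-independent constant) is strictly stronger than the stated hypothesis $\Re(\betaparam\D)>0$, so if you insist on it you are proving a narrower lemma than the one claimed. Second, note that when $\betaparam\D$ is held fixed as $h\to0$ (which is the regime of every numerical experiment in the paper), $\Bnorm[\D]{\cdot}$ and $\Vnorm{\cdot}$ are equivalent with $h$-independent constants and no such extra condition is required; your condition is only needed to allow $\betaparam\D$ to grow with $h^{-1}$, as the downstream theorems' hypothesis $\seminorm{\betaparam\D}\lesssim h^{-1}$ permits. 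It would be worth flagging explicitly that the blanket citation to Sauter--Schwab presupposes the coercivity, continuity, and approximation thresholds hold uniformly in the $h$-dependent norm, which is the one nontrivial thing to check here and which the paper does not spell out.
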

\begin{proof}
This also follows from \cite[theorem 4.2.9]{sauter-schwab}.
\end{proof}

%%%%%%%%%%%%%%%%%%%%%%%%%%%%%%%%%%%%%%%%%%%%%%
%%%%%%%%%%%%%%%%%%%%%%%%%%%%%%%%%%%%%%%%%%%%%%
\subsection{\emph{A priori} error bounds}
If $\Gamma$ is a discretisation of a smooth surface, then we can take $\productspace{V}_h=\Poly_h^p(\Gamma)\times\Poly_h^q(\Gamma)$
as our discrete space. The following result gives the approximation properties of this space.

\begin{lemma}[Approximation in $\Poly_h^p(\Gamma)\times\Poly_h^q(\Gamma)$]\label{diri:p1p1_approx}
If $\seminorm{\betaparam\D}\lesssim h^{-1}$,
then $\forall (v,\mu) \in \Hspace{s}\times \Hspace{r}$,
%$\forall (v,\mu) \in \productspace{V}$
\begin{equation*}
\inf_{(w_h,\eta_h) \in \Poly_h^p(\Gamma)\times\Poly_h^q(\Gamma)}
    \Bnorm[\D]{(v-w_h,\mu - \eta_h)}
\lesssim h^{\zeta-1/2} \Hpwseminorm{\zeta}{v} +
h^{\xi+1/2} \Hpwseminorm{\xi}{\mu},
\end{equation*}
where $\zeta = \min(p+1,s)$, $\xi = \min(q+1,r)$,
$s\geqslant\frac12$ and $r\geqslant-\frac12$.
\end{lemma}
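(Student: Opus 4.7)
The plan is to construct a concrete candidate pair $(w_h,\eta_h)\in\Poly_h^p(\Gamma)\times\Poly_h^q(\Gamma)$ using standard quasi-interpolation operators and then bound each term appearing in $\Bnorm[\D]{\cdot}$ separately, taking care of the extra $L^2$-term via the hypothesis $|\betaparam\D|\lesssim h^{-1}$.

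First I would take $w_h = I_h^p v$, where $I_h^p:H^s(\Gamma)\to \Poly_h^p(\Gamma)$ is a Scott--Zhang (or Clément) type quasi-interpolation operator that respects the polygonal face structure and is defined for $s\geqslant\tfrac12$. Standard approximation theory on shape-regular meshes gives simultaneously, with $\zeta = \min(p+1,s)$,
\begin{align*}
\Hnorm{1/2}{v-w_h} &\lesssim h^{\zeta-1/2}\Hpwseminorm{\zeta}{v}, &
\Ltwonorm[\Gamma]{v-w_h} &\lesssim h^{\zeta}\Hpwseminorm{\zeta}{v}.
\end{align*}
For the flux component I would take $\eta_h$ to be the $L^2$-orthogonal projection of $\mu$ onto $\Poly_h^q(\Gamma)$ (or, for $r<0$, an analogous dually-defined operator). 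The standard direct estimate in $L^2$ combined with an Aubin--Nitsche-style duality argument in the negative norm yields, with $\xi = \min(q+1,r)$,
\[
\Hnorm{-1/2}{\mu-\eta_h}\lesssim h^{\xi+1/2}\Hpwseminorm{\xi}{\mu}.
\]
These are all standard facts and I would simply cite them from \cite{sauter-schwab} or \cite{Stein07} rather than re-deriving them; the only mild subtlety is that for $r\in[-\tfrac12,0)$ one must invoke the negative-norm estimate for the $L^2$-projection on quasi-uniform meshes.

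Plugging into the definition
\[
\Bnorm[\D]{(v-w_h,\mu-\eta_h)} = \Hnorm{1/2}{v-w_h}+\Hnorm{-1/2}{\mu-\eta_h}+\seminorm{\betaparam\D}^{1/2}\Ltwonorm[\Gamma]{v-w_h},
\]
the first two terms already produce the two advertised contributions. For the third term I use the hypothesis $|\betaparam\D|\lesssim h^{-1}$, which gives $\seminorm{\betaparam\D}^{1/2}\lesssim h^{-1/2}$, and combine with the $L^2$ estimate on $v-w_h$:
\[
\seminorm{\betaparam\D}^{1/2}\Ltwonorm[\Gamma]{v-w_h}\lesssim h^{-1/2}\cdot h^{\zeta}\Hpwseminorm{\zeta}{v}=h^{\zeta-1/2}\Hpwseminorm{\zeta}{v},
\]
which is absorbed into the same rate as the $H^{1/2}$ term. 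Summing the three bounds and taking the infimum completes the proof.

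The main obstacle — or really the only delicate point — is making sure the interpolation operators are valid for the limiting regularities stated in the lemma ($s\geqslant\tfrac12$ for $v$ and $r\geqslant-\tfrac12$ for $\mu$), and that the optimal $L^2$-to-$H^{-1/2}$ duality estimate for the projection is available on the (possibly polyhedral) surface $\Gamma$. Once these ingredients are in place the estimate itself is immediate, so I would keep the written proof to a short paragraph citing the relevant approximation results and concluding with the scaling argument above.
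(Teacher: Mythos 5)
Your proposal is correct and, as far as can be determined, follows the same route the paper relies on: the paper's own proof is simply a pointer to the analogous result (Proposition~4.14) in \cite{BeBuScro18}, which proceeds exactly as you describe — pick quasi-interpolants/projections for each component, invoke the standard $H^{1/2}$, $L^2$, and dual-norm approximation estimates, and absorb the penalty term using $\seminorm{\betaparam\D}^{1/2}\lesssim h^{-1/2}$. Your attention to the $r\in[-\tfrac12,0)$ and $s\geqslant\tfrac12$ edge cases is the right place to flag the only genuine subtlety.
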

\begin{proof}
This can be proved in the same way as \cite[proposition 4.14]{BeBuScro18}.
\end{proof}

We can now prove the \emph{a priori} error bound for $\productspace{V}_h=\Poly_h^p(\Gamma)\times\Poly_h^q(\Gamma)$.

\begin{theorem}
Assume that
$h<h_0$,
$\Re(\betaparam\D)>0$,
$\seminorm{\betaparam\D}\lesssim h^{-1}$, and
$k^2$ is not a Robin eigenvalue of the interior Laplacian with Robin parameter $\betaparam\D$.
The solution, $(u,\lambda)\in \Hspace{s}\times \Hspace{r}$, of \cref{eq:abstract_form_helmDir}, and
the solution, $(u_h,\lambda_h)\in \Poly_h^p(\Gamma)\times\Poly_h^q(\Gamma)$, of \cref{eq:discrete} satisfy
\[
\Bnorm[\D]{(u-u_h,\lambda-\lambda_h)} \lesssim
h^{\zeta-1/2} \Hpwseminorm{\zeta}{u}
+ h^{\xi+1/2}\Hpwseminorm{\xi}{\lambda},
\]
where $\zeta = \min(p+1,s)$ and $\xi = \min(q+1,r)$.
\end{theorem}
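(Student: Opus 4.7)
The plan is to prove this \emph{a priori} error bound by a direct two-step argument chaining together the two preceding results, namely the quasi-optimality statement in \cref{diri:important_lemma} and the approximation estimate in \cref{diri:p1p1_approx}. There is no new ingredient to invent: all the hard work (injectivity, Gårding-type coercivity of $\form{A}_\bop{T}+\form{B}\exterior\D$, discrete well-posedness via \cite[theorem 4.2.9]{sauter-schwab}, and the approximation properties of the polynomial spaces) has already been carried out earlier in the paper.

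First, I would verify that the hypotheses of \cref{diri:important_lemma} are met under the hypotheses of the theorem. This is immediate: we are assuming $h<h_0$, $\Re(\betaparam\D)>0$, and that $k^2$ is not a Robin eigenvalue of the interior Laplacian with Robin parameter $\betaparam\D$, which are exactly the assumptions of that lemma. Applying it to the pair $(u,\lambda)$, $(u_h,\lambda_h)$ yields
\[
\Bnorm[\D]{(u-u_h,\lambda-\lambda_h)} \lesssim \inf_{(v_h,\mu_h)\in \productspace{V}_h} \Bnorm[\D]{(u-v_h,\lambda-\mu_h)}.
\]

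Second, since by hypothesis $\seminorm{\betaparam\D}\lesssim h^{-1}$ and $u\in \Hspace{s}$, $\lambda\in\Hspace{r}$ (with $s\geqslant \tfrac12$ and $r\geqslant-\tfrac12$, which is implicit given the regularity required by the continuous formulation), I would apply \cref{diri:p1p1_approx} with $v=u$ and $\mu=\lambda$ to bound the right-hand side above by
\[
h^{\zeta-1/2}\Hpwseminorm{\zeta}{u} + h^{\xi+1/2}\Hpwseminorm{\xi}{\lambda},
\]
where $\zeta=\min(p+1,s)$ and $\xi=\min(q+1,r)$. Combining the two displays gives exactly the asserted bound, and the proof is complete.

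Since the proof is an essentially mechanical composition of two previously-proved results, there is no genuine obstacle. The only point worth mentioning is checking that the penalty scaling $\seminorm{\betaparam\D}\lesssim h^{-1}$ is compatible with both lemmas: \cref{diri:important_lemma} requires only that $h$ be small enough for the discrete problem to be well-posed, while \cref{diri:p1p1_approx} explicitly uses the $h^{-1}$ growth of $\seminorm{\betaparam\D}$ in the $\Ltwonorm[\Gamma]{\cdot}$ term of $\Bnorm[\D]{\cdot}$ to absorb a factor of $h^{-1/2}$ into the $h^{\zeta-1/2}$ rate on the trace. This compatibility is automatic from the theorem's hypotheses, so the argument is just a two-line chaining.
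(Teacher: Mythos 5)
Your proof is correct and matches the paper's proof exactly: the paper proves this theorem by the same two-line chaining of \cref{diri:important_lemma} (quasi-optimality) with \cref{diri:p1p1_approx} (approximation), and your verification of the hypotheses is sound.
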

\begin{proof}
Combine \cref{diri:p1p1_approx} and \cref{diri:important_lemma}.
\end{proof}

When $\Gamma$ is not smooth, the flux space must be chosen so that it can represent jumps in the normal
derivative between cells. In this case, we take $\productspace{V}_h=\Poly_h^p(\Gamma)\times\DPoly_h^q(\Gamma)$.
The following result gives the approximation properties of this space.

\begin{lemma}[Approximation in $\Poly_h^p(\Gamma)\times\DPoly_h^q(\Gamma)$]\label{diri:approx}
If $\seminorm{\betaparam\D}\lesssim h^{-1}$,
then $\forall (v,\mu) \in \Hspace{s}\times \Hspace{r}$,
%$\forall (v,\mu) \in \productspace{V}$
\begin{equation*}
\inf_{(w_h,\eta_h) \in \Poly_h^p(\Gamma)\times\DPoly_h^q(\Gamma)}
    \Bnorm[\D]{(v-w_h,\mu - \eta_h)}
\lesssim h^{\zeta-1/2} \Hpwseminorm{\zeta}{v} +
h^{\xi+1/2} \Hpwseminorm{\xi}{\mu},
\end{equation*}
where $\zeta = \min(p+1,s)$, $\xi = \min(q+1,r)$,
$s\geqslant\frac12$ and $r\geqslant-\frac12$.
\end{lemma}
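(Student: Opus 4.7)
The plan is to adapt the proof of the preceding lemma (\cref{diri:p1p1_approx}), where the flux space is $\Poly_h^q(\Gamma)$, by replacing the approximant for $\mu$ with a local $L^2$-projection into the discontinuous space $\DPoly_h^q(\Gamma)$. Since the norm splits as
\[
\Bnorm[\D]{(v-w_h,\mu-\eta_h)} = \Hnorm{1/2}{v-w_h} + \Hnorm{-1/2}{\mu-\eta_h} + \seminorm{\betaparam\D}^{1/2}\Ltwonorm[\Gamma]{v-w_h},
\]
the contributions of $v$ and $\mu$ can be estimated independently.

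First I would handle the primal part. Choose $w_h \in \Poly_h^p(\Gamma)$ to be a Scott--Zhang-type quasi-interpolant of $v$. Standard polynomial approximation theory on shape-regular triangulations gives the piecewise estimates $\Hnorm{1/2}{v-w_h} \lesssim h^{\zeta-1/2}\Hpwseminorm{\zeta}{v}$ and $\Ltwonorm[\Gamma]{v-w_h} \lesssim h^{\zeta}\Hpwseminorm{\zeta}{v}$, exactly as in the continuous-flux case. The penalty term is then controlled using the hypothesis $\seminorm{\betaparam\D}\lesssim h^{-1}$:
\[
\seminorm{\betaparam\D}^{1/2}\Ltwonorm[\Gamma]{v-w_h} \lesssim h^{-1/2}\cdot h^{\zeta}\Hpwseminorm{\zeta}{v} = h^{\zeta-1/2}\Hpwseminorm{\zeta}{v}.
\]

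Next I would handle the flux part, which is the only genuine novelty compared with \cref{diri:p1p1_approx}. Let $\eta_h$ denote the elementwise $L^2$-projection of $\mu$ onto $\DPoly_h^q(\Gamma)$. Since this projector is local, the standard per-element Bramble--Hilbert argument yields $\Ltwonorm[\Gamma]{\mu-\eta_h}\lesssim h^{\xi}\Hpwseminorm{\xi}{\mu}$ for $0\leqslant \xi \leqslant q+1$. To pass from $L^2$ to $H^{-1/2}$, I would use the duality definition of $\Hnorm{-1/2}{\cdot}$ together with the $L^2$-orthogonality of the projection: for any $w\in\Hspace{1/2}$, writing $\pi_h w$ for its $L^2$-projection onto $\DPoly_h^q(\Gamma)$,
\[
\seminorm{\Ltwoinner{w}{\mu-\eta_h}}=\seminorm{\Ltwoinner{w-\pi_h w}{\mu-\eta_h}}\leqslant \Ltwonorm[\Gamma]{w-\pi_h w}\,\Ltwonorm[\Gamma]{\mu-\eta_h}.
\]
Combining the flux bound with the standard projection estimate $\Ltwonorm[\Gamma]{w-\pi_h w}\lesssim h^{1/2}\Hnorm{1/2}{w}$ and dividing through by $\Hnorm{1/2}{w}$ gives the desired $\Hnorm{-1/2}{\mu-\eta_h}\lesssim h^{\xi+1/2}\Hpwseminorm{\xi}{\mu}$. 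For the edge case where $\xi$ is negative (i.e.\ $-\tfrac12\leqslant r<0$), one cannot bound $\Ltwonorm{\mu-\eta_h}$ directly; here I would instead appeal to a negative-order approximation result for the $L^2$-projection (as in \cite[chapter~4]{sauter-schwab}), or alternatively use the inclusion $\Poly_h^q(\Gamma)\subset\DPoly_h^q(\Gamma)$ to reduce to the corresponding bound in \cref{diri:p1p1_approx}.

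The main obstacle is the $H^{-1/2}$ estimate for the discontinuous $L^2$-projection, since the duality argument requires the auxiliary $L^2$-approximation property of $\pi_h$ acting on $\Hspace{1/2}$; this is where the shape-regularity of $\{\Th\}$ and the standard inverse/scaling arguments on each element enter. Once this is in place, adding the primal and flux contributions gives the stated bound, and the proof is complete.
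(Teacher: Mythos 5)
The paper's own proof of \cref{diri:approx} is merely a citation to \cite[proposition~4.14]{BeBuScro18}; your reconstruction---a Scott--Zhang quasi-interpolant for $v$ together with the penalty hypothesis $\seminorm{\betaparam\D}\lesssim h^{-1}$, an elementwise $L^2$-projection for $\mu$, and the orthogonality/duality argument lifting the $L^2$-estimate to $H^{-1/2}$---is precisely the standard argument that that proposition carries out, so you have supplied what the paper omits. You are also right to flag the edge case $\xi<0$ (i.e.\ $-\tfrac12\leqslant r<0$), where the $L^2$-projection is not directly defined and must be handled via the negative-order approximation estimates in \cite{sauter-schwab}; that is the one place where the argument needs the extra care you note.
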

\begin{proof}
See \cite[proposition 4.14]{BeBuScro18}.
\end{proof}

We can now prove the \emph{a priori} error bound for $\productspace{V}_h=\Poly_h^p(\Gamma)\times\DPoly_h^q(\Gamma)$.

\begin{theorem}
Assume that
$h<h_0$,
$\Re(\betaparam\D)>0$,
$\seminorm{\betaparam\D}\lesssim h^{-1}$, and
$k^2$ is not a Robin eigenvalue of the interior Laplacian with Robin parameter $\betaparam\D$.
The solution, $(u,\lambda)\in \Hspace{s}\times \Hspace{r}$, of \cref{eq:abstract_form_helmDir}, and
the solution, $(u_h,\lambda_h)\in\Poly_h^p(\Gamma)\times\DPoly_h^q(\Gamma)$, of \cref{eq:discrete} satisfy
\[
\Bnorm[\D]{(u-u_h,\lambda-\lambda_h)} \lesssim
h^{\zeta-1/2} \Hpwseminorm{\zeta}{u}
+ h^{\xi+1/2}\Hpwseminorm{\xi}{\lambda},
\]
where $\zeta = \min(p+1,s)$ and $\xi = \min(q+1,r)$.
\end{theorem}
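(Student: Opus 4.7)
The plan is to mirror the proof of the preceding theorem almost verbatim, since the only difference is the replacement of the discrete space $\Poly_h^p(\Gamma)\times\Poly_h^q(\Gamma)$ by $\Poly_h^p(\Gamma)\times\DPoly_h^q(\Gamma)$, which changes only which approximation lemma is invoked. Thus the argument is simply a chain of two inequalities: a quasi-optimal Céa-type estimate followed by a best-approximation bound.

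First, I would verify that the hypotheses of \cref{diri:important_lemma} are satisfied. The assumptions $h<h_0$, $\Re(\betaparam\D)>0$, and that $k^2$ is not a Robin eigenvalue of the interior Laplacian with Robin parameter $\betaparam\D$ are each explicit hypotheses of the theorem. Applying \cref{diri:important_lemma} with the specific choice $\productspace{V}_h=\Poly_h^p(\Gamma)\times\DPoly_h^q(\Gamma)$ therefore yields
\[
\Bnorm[\D]{(u-u_h,\lambda-\lambda_h)} \lesssim \inf_{(v_h,\mu_h)\in\Poly_h^p(\Gamma)\times\DPoly_h^q(\Gamma)} \Bnorm[\D]{(u-v_h,\lambda-\mu_h)}.
\]

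Second, I would bound this infimum using \cref{diri:approx}, applied with $(v,\mu)=(u,\lambda)\in\Hspace{s}\times\Hspace{r}$. The remaining hypothesis $\seminorm{\betaparam\D}\lesssim h^{-1}$ is precisely what that lemma requires, and its conclusion bounds the infimum by $h^{\zeta-1/2}\Hpwseminorm{\zeta}{u}+h^{\xi+1/2}\Hpwseminorm{\xi}{\lambda}$ with $\zeta=\min(p+1,s)$ and $\xi=\min(q+1,r)$, which matches the stated estimate. Concatenating these two inequalities finishes the proof.

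There is no real obstacle here, since the substantive work has already been absorbed into the earlier lemmas: well-posedness and coercivity modulo a compact perturbation feed into \cref{diri:important_lemma} via the abstract theory of \cite[theorem 4.2.9]{sauter-schwab}, while the polynomial interpolation estimates (including the correction due to the $\seminorm{\betaparam\D}^{1/2}\Ltwonorm[\Gamma]{\cdot}$ contribution in $\Bnorm[\D]{\cdot}$) are packaged into \cref{diri:approx}. The only conceptual point worth noting is that, compared to the smooth-surface version with $\Poly_h^q$, the use of the discontinuous space $\DPoly_h^q$ is essential when $\Gamma$ has edges, as the exact flux $\lambda=\trace[e]{1}u$ may have jumps across polyhedral faces that a globally continuous space cannot resolve at the optimal rate.
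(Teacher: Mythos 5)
Your proposal is correct and takes essentially the same approach as the paper, which simply says to combine \cref{diri:approx} with \cref{diri:important_lemma}; your write-up just spells out the two-step chain (quasi-optimality followed by the approximation bound) explicitly.
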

\begin{proof}
Combine \cref{diri:approx} and \cref{diri:important_lemma}.
\end{proof}

Recall that due to \cref{lemma:robin_nontrivial}, the Robin eigenvalue assumption in the results in this
section holds whenvever $\Im(\betaparam\D)\not=0$.

%%%%%%%%%%%%%%%%%%%%%%%%%%%%%%%%%%%%%%%%%%%%%%%%%%%%%%%%%%%%%%%%%%%%
\section{Numerical results}\label{sec:numerical}
In this section, we demonstrate the theory with a series of numerical examples.
All the results presented were computed using version 0.2.4 of Bempp-cl, an open source Python boundary
element method library \cite{bempp-cl}.
We precondition all linear systems in this section with blocked mass matrix preconditioners applied from the left.
We take $\productspace{V}_h=\Poly_h^1(\Gamma)\times\Poly_h^1(\Gamma)$ throughout this section, so we use the preconditioner
\[
\begin{pmatrix}
M&0\\0&M
\end{pmatrix},
\]
where $M=(m_{ij})$ is defined by 
\[
m_{ij}=\Ltwoinner{\phi_j}{\phi_i},
\]
where $\{\phi_0,\phi_1,...\}$ is the basis of the space $\Poly_h^1(\Gamma)$.
The preconditioning corresponds to taking the discrete strong form of the operator, as described in \cite{2020-productalgebras}.

\newcommand{\helmf}[1]{
    \frac{\e^{\ii k\seminorm{#1}}}{\seminorm{#1}}
}
\newcommand{\helmdf}[1]{
    \frac{(\ii k\seminorm{#1}-1)\e^{\ii k\seminorm{#1}}}{\seminorm{#1}^3}#1\cdot\vec\nu
% (1j*k*r-1)*np.exp(1j*k*r) * np.dot(x,n) / r**3
}

Define
\begin{align}
g\D(\x)&=\helmf{\vec{r}_1} + \helmf{\vec{r}_2},\label{helmdirinum_bc}
\end{align}
where $\vec{r}_1=\x-(\frac1{10}, \frac12, \frac12)$ and $\vec{r}_2=\x-(\frac1{10}, \frac14, \frac14)$.
Let $\Omega\interior$ be a bounded open domain such that
$(\frac1{10}, \frac12, \frac12)\in\Omega\interior$ and
$(\frac1{10}, \frac14, \frac14)\in\Omega\interior$, and let
$\Omega\exterior:=\RR^3\setminus\overline{\Omega\interior}$.
It is easy to check that for any wavenumber $k>0$,
$u(\x)=\helmf{\vec{r}_1} + \helmf{\vec{r}_2}$ is the solution of the exterior Helmholtz problem
\begin{subequations}
\begin{align}
-\Delta u-k^2u&=0&&\text{in }\Omega\exterior,\\
\frac{\partial u\scat}{\partial\abs{\x}}-\ii ku\scat&=o(\abs{\x}^{-1})&&\text{as }\abs{\x}\to\infty,\\
u&=g\D&&\text{on }\Gamma,
\end{align}
\end{subequations}
with $u\inc=0$ (and so $u=u\scat$). It can also be seen that
\begin{align}
\frac{\partial u}{\partial\vec\nu}&=\helmdf{\vec{r}_1} + \helmdf{\vec{r}_2}\label{helmneunum_bc}
&&\text{on }\Gamma\N.
\end{align}
Using \cref{helmdirinum_bc,helmneunum_bc}, we can compute the error of the solutions obtained in this section.

\begin{figure}
\centering
\null\hfill
\begin{tikzpicture}
\begin{axis}[width=\plotsize\textwidth,axis on top,
             xlabel={$k$},xmin=1,xmax=5,
             ylabel near ticks,ylabel={\errorlabel},ymode=log,ymax=0.1,ymin=1e-2
]
\referenceplot table {%
2.759 1e-2
2.759 0.1
};
\dotplot table {data/vary_k_real_error.dat};
\end{axis}
\end{tikzpicture}
\hfill
\begin{tikzpicture}
\begin{axis}[width=\plotsize\textwidth,axis on top,
             xlabel={$k$},xmin=1,xmax=5,
             ylabel near ticks,ylabel={{\textnumero} of GMRES iterations},ymin=0,ylabel near ticks,ymax=50
]
\referenceplot table {%
2.759 0
2.759 400
};
\dotplot table {data/vary_k_real_iter.dat};
\end{axis}
\end{tikzpicture}
\hfill\null
\caption{The error (left) and GMRES iteration counts (right) of the penalty method with $\betaparam\D=1$
for the Helmholtz Dirichlet problem with varying $k$ on the unit sphere with $h=2^{-2}$.
Here we take $(u_h,\lambda_h),(v_h,\mu_h)\in\Poly_h^1(\Gamma)\times\Poly_h^1(\Gamma)$
and solve to a GMRES tolerance of $10^{-5}$.}
\label{fig:vary_k_real}
\end{figure}
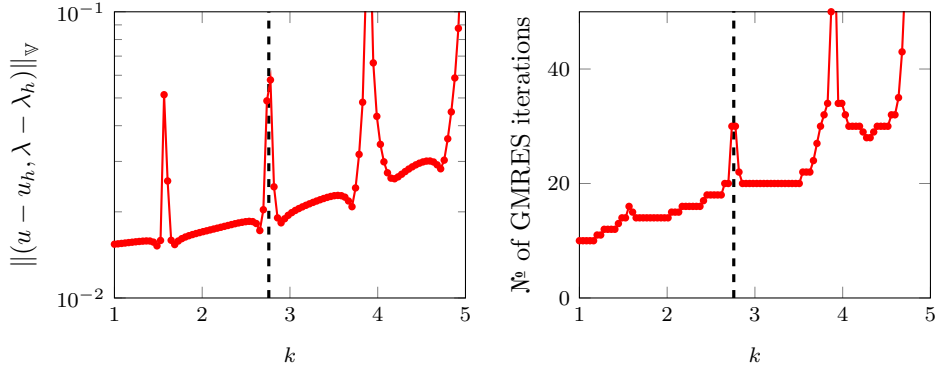

\Cref{fig:vary_k_real} shows how the error and number of GMRES iterations vary as $k$ is increased, using $\betaparam\D=1$.
It can be seen that at various points, the number of iterations and error grows as $k$ approaches a problematic Robin eigenvalue
of the problem: one of these is near 2.759, as indicated by the dashed lines.

\begin{figure}
\centering
\null\hfill
\begin{tikzpicture}
\begin{axis}[width=\plotsize\textwidth,axis on top,
             xlabel={$\betaparam\D$},xmode=log,xmin=1e-6,xmax=1e6,
             ylabel near ticks,ylabel={\errorlabel},ymode=log,ymax=10,ymin=1e-4
]
\referenceplot table {%
1 1e-4
1 10
};
\oneplot table {data/vary_beta_real_a_error.dat};
\sametwoplot table {data/vary_beta_real_b_error.dat};
\samethreeplot table {data/vary_beta_real_c_error.dat};
\end{axis}
\end{tikzpicture}
\hfill
\begin{tikzpicture}
\begin{axis}[width=\plotsize\textwidth,axis on top,
             xlabel={$\betaparam\D$},xmode=log,xmin=1e-6,xmax=1e6,
             ylabel near ticks,ylabel={{\textnumero} of GMRES iterations},ymin=0,ylabel near ticks,ymax=50
]
\referenceplot table {%
1 0
1 400
};
\oneplot table {data/vary_beta_real_a_iter.dat};
\sametwoplot table {data/vary_beta_real_b_iter.dat};
\samethreeplot table {data/vary_beta_real_c_iter.dat};
\end{axis}
\end{tikzpicture}
\hfill\null
\caption{The error (left) and GMRES iteration counts (right) of the penalty method with varying real $\betaparam\D$
for the Helmholtz Dirichlet problem with $k=2.759$ on the unit sphere with
$h=2^{-2}$ (\oneplotdesc),
$h=2^{-3}$ (\sametwoplotdesc),
and $h=2^{-4}$ (\samethreeplotdesc).
Here we take $(u_h,\lambda_h),(v_h,\mu_h)\in\Poly_h^1(\Gamma)\times\Poly_h^1(\Gamma)$
and solve to a GMRES tolerance of $10^{-5}$.}
\label{fig:vary_beta_real}
\end{figure}
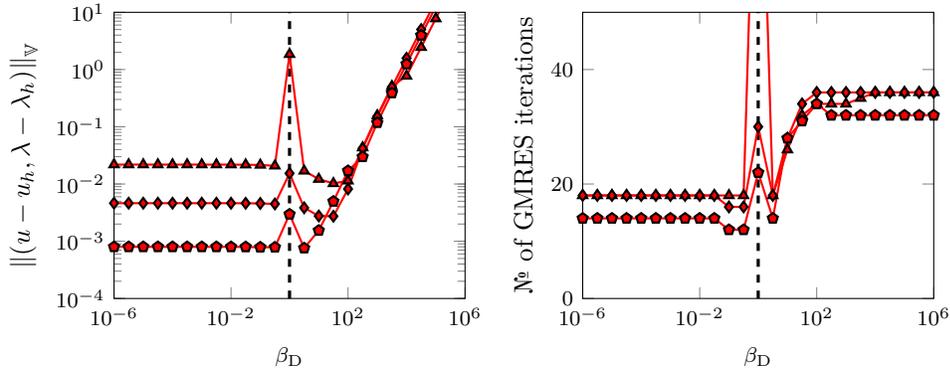

In \cref{sec:analysis}, we saw that our formulation can fail to have a unique solution when $k^2$ is Robin eigenvalues
of the Laplacian with Robin parameter $\betaparam\D$. This implies that adjusting the parameter $\betaparam\D$ will adjust the
locations of these eigenvalues.
This can be oberved in \cref{fig:vary_beta_real}, where the error and number of iterations are shown for
the problem with $k=2.759$ and varying real $\betaparam\D$. The spike in the error and number of iterations at $\betaparam\D=1$
(as shown by the dashed lines)
is due to approaching the same eigenvalue that we saw in \cref{fig:vary_k_real}. We observe that the increase in the error
is less pronounced for meshes with a lower value of $h$. This is due to $2.759$ being near an eigenvalue of a given
discretisation of the sphere: the exact approximation of the sphere changes a little as we increase the number of cells
in the mesh, and so the eigenvalues differ on the different meshes used.

As we saw in \cite{BeBuScro18} for Laplace problems, we observe in \cref{fig:vary_beta_real} that the error and
iteration counts for the problem increase once $\betaparam\D$ is increased above a certain level.

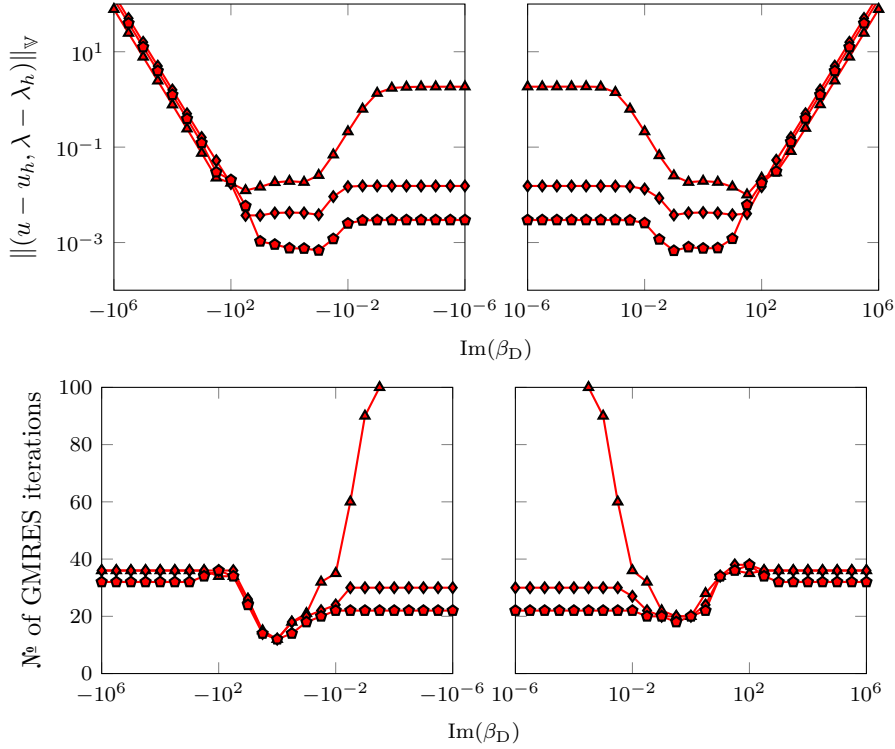
\begin{figure}
\centering
\begin{tikzpicture}
\begin{axis}[width=\plotsize\textwidth,axis on top,
             ,xmin=1e-6,xmax=1e6,xmode=log,x dir=reverse,
             ylabel near ticks,ylabel={\errorlabel},ymode=log,ymax=100,ymin=1e-4,
             xticklabels={$-10^{-6}$,$-10^{-2}$,$-10^{2}$,$-10^{6}$}
]
\oneplot table {data/vary_beta_imag_aneg_error.dat};
\sametwoplot table {data/vary_beta_imag_bneg_error.dat};
\samethreeplot table {data/vary_beta_imag_cneg_error.dat};
\end{axis}
\begin{scope}[shift={(5.5,0)}]
\begin{axis}[width=\plotsize\textwidth,axis on top,
             xlabel={$\Im(\betaparam\D)$},xlabel style={xshift=-2.75cm},xmin=1e-6,xmax=1e6,xmode=log,
             ylabel near ticks,ymode=log,ymax=100,ymin=1e-4,yticklabels={,,}
]
\oneplot table {data/vary_beta_imag_apos_error.dat};
\sametwoplot table {data/vary_beta_imag_bpos_error.dat};
\samethreeplot table {data/vary_beta_imag_cpos_error.dat};
\end{axis}
\end{scope}
\end{tikzpicture}
\\
\begin{tikzpicture}
\begin{axis}[width=\plotsize\textwidth,axis on top,
             xmin=1e-6,xmax=1e6,xmode=log,x dir=reverse,
             ylabel near ticks,ylabel={{\textnumero} of GMRES iterations},ymin=0,ylabel near ticks, ymax=100,
             xticklabels={$-10^{-6}$,$-10^{-2}$,$-10^{2}$,$-10^{6}$}
]
\oneplot table {data/vary_beta_imag_aneg_iter.dat};
\sametwoplot table {data/vary_beta_imag_bneg_iter.dat};
\samethreeplot table {data/vary_beta_imag_cneg_iter.dat};
\end{axis}
\begin{scope}[shift={(5.5,0)}]
\begin{axis}[width=\plotsize\textwidth,axis on top,
             xlabel={$\Im(\betaparam\D)$},xlabel style={xshift=-2.75cm},xmin=1e-6,xmax=1e6,xmode=log,
             ylabel near ticks,ymin=0,ylabel near ticks, ymax=100,yticklabels={,,,,}
]
\oneplot table {data/vary_beta_imag_apos_iter.dat};
\sametwoplot table {data/vary_beta_imag_bpos_iter.dat};
\samethreeplot table {data/vary_beta_imag_cpos_iter.dat};
\end{axis}
\end{scope}
\end{tikzpicture}
\caption{The error (top) and GMRES iteration counts (bottom) of the penalty method with varying $\betaparam\D$ with
$\Re(\betaparam\D)=1$
for the Helmholtz Dirichlet problem with $k=2.759$ on the unit sphere with
$h=2^{-2}$ (\oneplotdesc),
$h=2^{-3}$ (\sametwoplotdesc),
and $h=2^{-4}$ (\samethreeplotdesc).
Here we take $(u_h,\lambda_h),(v_h,\mu_h)\in\Poly_h^1(\Gamma)\times\Poly_h^1(\Gamma)$
and solve to a GMRES tolerance of $10^{-5}$.}
\label{fig:vary_beta_imag}
\end{figure}

\Cref{fig:vary_beta_imag} shows how the error and iteration counts vary as we adjust the imaginary part of $\betaparam\D$
when the real part of $\betaparam\D$ is fixed as 1, again taking $k=2.759$ so that we hit an eigenvalue when
$\Im(\betaparam\D)=0$. We see that once $|\Im(\betaparam\D)|$ is greater than around $10^{-2}$,
the error and iteration count drop. Once $|\Im(\betaparam\D)|$ is too large, the error and iteration count rise in a
similar way to that we observed when taking a large real $\betaparam\D$.
We observe that the iteration count is slightly lower for a small range of values when $\Im(\betaparam\D)$ is negative.

\begin{figure}
\centering
\null\hfill
\begin{tikzpicture}
\begin{axis}[width=\plotsize\textwidth,axis on top,
             xlabel={$k$},xmin=1,xmax=5,
             ylabel near ticks,ylabel={\errorlabel},ymode=log,ymax=0.1,ymin=1e-2
]
\dotplot table {data/vary_k_imag-i_error.dat};
\end{axis}
\end{tikzpicture}
\hfill
\begin{tikzpicture}
\begin{axis}[width=\plotsize\textwidth,axis on top,
             xlabel={$k$},xmin=1,xmax=5,
             ylabel near ticks,ylabel={{\textnumero} of GMRES iterations},ymin=0,ylabel near ticks,ymax=50
]
\dotplot table {data/vary_k_imag-i_iter.dat};
\end{axis}
\end{tikzpicture}
\hfill\null
\caption{The error (left) and GMRES iteration counts (right) of the penalty method with $\betaparam\D=1-\ii$
for the Helmholtz Dirichlet problem with varying $k$ on the unit sphere with $h=2^{-2}$.
Here we take $(u_h,\lambda_h),(v_h,\mu_h)\in\Poly_h^1(\Gamma)\times\Poly_h^1(\Gamma)$
and solve to a GMRES tolerance of $10^{-5}$.}
\label{fig:vary_k_real-i}
\end{figure}

Motivated by our observations in \cref{fig:vary_beta_imag}, we fix $\betaparam\D=1-\ii$. 
\Cref{fig:vary_k_real-i} shows how the error and iteration counts change as we increase $k$ with this value of
$\betaparam\D$. In agreement with \cref{lemma:robin_nontrivial},
we observe that (in contrast to \cref{fig:vary_k_real}) there is no vulnerability to
eigenvalues in this case, and the iteration count remains steady as we increase $k$.

\begin{figure}
\centering
\null\hfill
\begin{tikzpicture}
\begin{axis}[width=\plotsize\textwidth,axis on top,axis equal,
             xlabel={$h$},x dir=reverse,xmode=log,
             ylabel near ticks,ylabel={\errorlabel},ymode=log,
]
\oneplot table {data/convergence_3_error.dat};
\referenceplot table {%
0.5 0.25
0.02 0.0004
};
\end{axis}
\end{tikzpicture}
\hfill
\begin{tikzpicture}
\begin{axis}[width=\plotsize\textwidth,axis on top,
             xlabel={$h$},x dir=reverse,xmode=log,xmin=3e-2,xmax=1,
             ylabel near ticks,ylabel={{\textnumero} of GMRES iterations},ymin=0,ylabel near ticks,ymax=50
]
\oneplot table {data/convergence_3_iter.dat};
\end{axis}
\end{tikzpicture}
\hfill\null
\caption{The error (left) and GMRES iteration counts (right) of the penalty method with $\betaparam\D=1-\ii$
for the Helmholtz Dirichlet problem with $k=3$ on the unit sphere as we reduce $h$.
The dashed line shows order 2 convergence.
Here we take $(u_h,\lambda_h),(v_h,\mu_h)\in\Poly_h^1(\Gamma)\times\Poly_h^1(\Gamma)$
and solve to a GMRES tolerance of $10^{-5}$.}
\label{fig:convergence}
\end{figure}
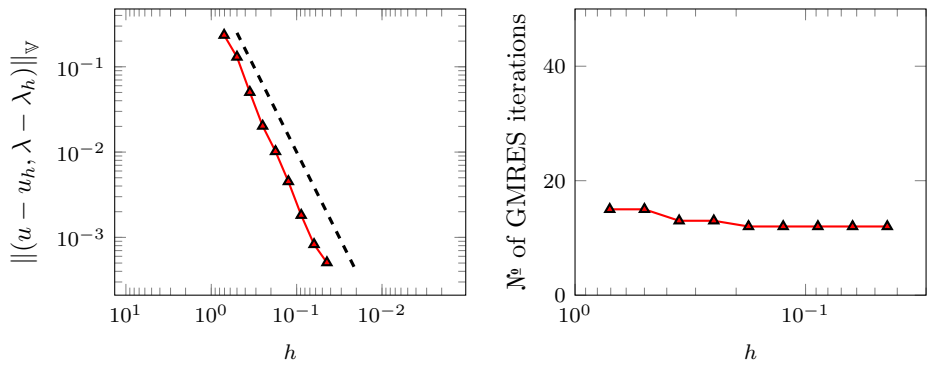

\Cref{fig:convergence} shows the error and number of iterations as we reduce $h$. We observe order 2 convergence,
and see that the number of iterations remains the same as $h$ is reduced, demonstrating the effectiveness of the
mass matrix preconditioner.

%%%%%%%%%%%%%%%%%%%%%%%%%%%%%%%%%%%%%%%%%%%%%%%%%%%%%%%%%%%%%%%%%%%%
%%%%%%%%%%%%%%%%%%%%%%%%%%%%%%%%%%%%%%%%%%%%%%%%%%%%%%%%%%%%%%%%%%%%
\section{Conclusions}\label{sec:conclusions}
In this paper, we have derived and analysed a formulation for weak imposition of Dirichlet boundary conditions on the Helmholtz
equation. By taking a parameter with a non-zero imaginary part, Helmholtz problems can be solved at any real wavenumber without
any difficulties caused by resonances of the interior problem.

\label{sec:mixed}
\begin{figure}
\centering
\begin{tikzpicture}
     \node at (0,0) {\includegraphics[width=90mm]{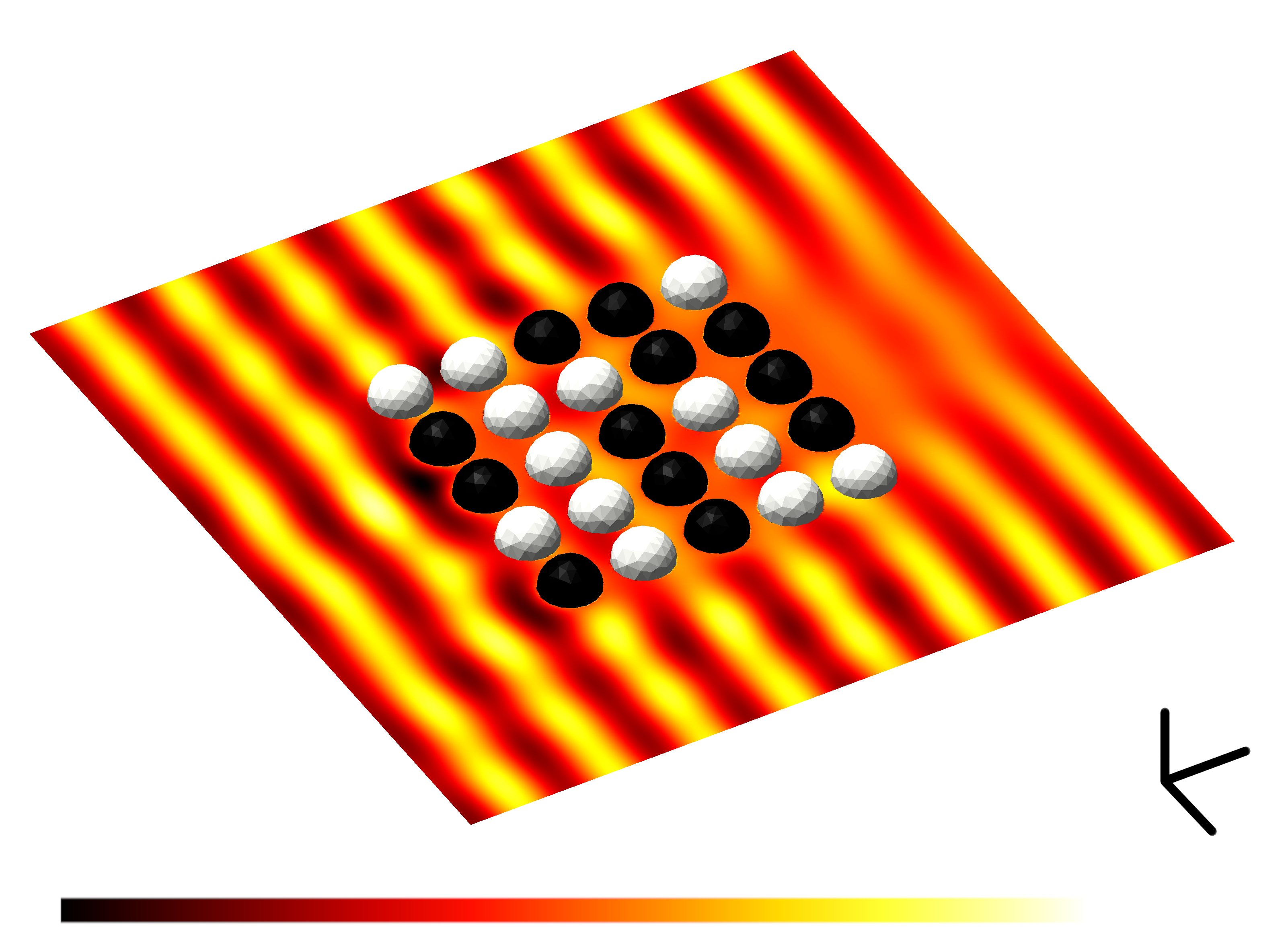}};
    \begin{footnotesize}
     \node at (4.1,-2.35) {$x$};
     \node at (4.35,-1.8) {$y$};
     \node at (3.7,-1.5) {$z$};
     \node at (-4.1,-2.75) {$-1.9$};
     \node at (-0.5,-2.75) {$0$};
     \node at (3.1,-2.75) {$1.9$};
     \node at (0,-4) {\ };
    \end{footnotesize}
\end{tikzpicture}
\caption{The incident wave $u\inc=\e^{\ii k\x\cdot\vec{d}}$, where $\vec{d}=(0,1,0)$ and $k=2$, scattering off
25 spheres. The white spheres are sound-hard and the black spheres are sound-soft.}
\label{fig:helmmanyplot}
\end{figure}

The formulation derived in this paper bears a close resemblance to the formulations for Laplace that we
derived and analysed in \cite{BeBuScro18}. Formulations for Helmholtz problems with mixed Dirichlet--Neumann or Robin
boundary conditions could be derived in the same way. We expect that these formulations could be analysed following a
similar method as used here, although this analysis appears to pose some additional challenges.
The weak imposition of mixed boundary conditions is demonstrated in \cref{fig:helmmanyplot}, where
we have plotted the scattering of an incident wave colliding with a collection of sound-hard
(a 0 Neumann boundary condition)
and sound-soft (a 0 Dirichlet boundary condition) spheres.

One benefit of formulating mixed problems in this way is that the boundary conditions are imposed
by adding sparse terms to the full Calder\'on system: for mixed boundary conditions, sparse terms assembled
on parts of the boundary can be added without any need to adjust the dense Calder\'on term. When solving an inverse
problem---for example when looking to find the material properties that should be used to give a scatterer a
certain desired property---the Calder\'on term (which is the most expensive part to assemble) can be reused
and different sparse terms added to solve the same problem with different boundary conditions.

One avenue of interest for further interest would be the weak imposition of boundary condition on Maxwell problems.
In the experiments we have run to explore this, however, we have been unable to obtain good solutions in a reasonable
amount of time. Maxwell problems are prone to being strongly ill-conditioned, and it appears that mass matrix preconditioning
is not enough to achieve good performance in the Maxwell case.
Therefore, we believe that it is necessary to design more powerful preconditioners for these weak formulations
in order to make this method feasible for Maxwell problems.

\section*{Acknowledgements} Erik Burman and Timo Betcke were supported by EPSRC
grants EP/P01576X/1 and EP/P012434/1.
We would like to thank the reviewers of this paper for their helpful comments; in particular, their insightful comments that enabled us to simplify many parts of the analysis
in \cref{sec:analysis}.

\bibliographystyle{siamplain}
\bibliography{references}

\begin{thebibliography}{10}

\bibitem{Babuska1973}
{\sc I.~Babu\u{s}ka}, {\em The finite element method with penalty}, Mathematics
  of Computation, 27 (1973), pp.~221--228,
  \url{https://doi.org/10.2307/2005611}.

\bibitem{Banjai07}
{\sc L.~Banjai and S.~Sauter}, {\em A refined {G}alerkin error and stability
  analysis for highly indefinite variational problems}, SIAM Journal on
  Numerical Analysis, 45 (2007), pp.~{37--53},
  \url{https://doi.org/10.1137/060654177}.

\bibitem{BeBuScro18}
{\sc T.~Betcke, E.~Burman, and M.~W. Scroggs}, {\em Boundary element methods
  with weakly imposed boundary conditions}, SIAM Journal on Scientific
  Computing, 41 (2019), pp.~A1357--A1384,
  \url{https://doi.org/10.1137/18M119625X}.

\bibitem{bempp-cl}
{\sc T.~Betcke and M.~W. Scroggs}, {\em Bempp-cl: {A} fast {P}ython based
  just-in-time compiling boundary element library}, Journal of Open Source
  Software,  (2021), p.~{2879}, \url{https://doi.org/10.21105/joss.02879}.

\bibitem{2020-productalgebras}
{\sc T.~Betcke, M.~W. Scroggs, and W.~\'{S}migaj}, {\em Product algebras for
  {G}alerkin discretisations of boundary integral operators and their
  applications}, ACM Transactions on Mathematical Software, 46 (2020),
  pp.~{4:1--4:22}, \url{https://doi.org/10.1145/3368618}.

\bibitem{brezis-book}
{\sc H.~Brezis}, {\em Functional Analysis, Sobolev Spaces and Partial
  Differential Equations}, Springer, 2011,
  \url{https://doi.org/10.1007/978-0-387-70914-7}.

\bibitem{BuFrScro19}
{\sc E.~Burman, S.~Frei, and M.~W. Scroggs}, {\em Weak imposition of
  {S}ignorini boundary conditions on the boundary element method}, SIAM Journal
  on Numerical Analysis, 58 (2020), pp.~{2334--2350},
  \url{https://doi.org/10.1137/19M1281721}.

\bibitem{Graham07}
{\sc V.~Dominguez, I.~Graham, and V.~Smyshlyaev}, {\em A hybrid
  numerical-asymptotic boundary integral method for high-frequency acoustic
  scattering}, Numerische Mathematik, 106 (2007), pp.~{471--510},
  \url{https://doi.org/10.1007/s00211-007-0071-4}.

\bibitem{Steinbach08}
{\sc S.~Engleder and O.~Steinbach}, {\em Stabilized boundary element methods
  for exterior {H}elmholtz problems}, Numerische Mathematik, 110 (2008),
  pp.~{145--160}, \url{https://doi.org/10.1007/s00211-008-0161-y}.

\bibitem{Graham15}
{\sc I.~Graham, M.~L{\"o}hndorf, J.~Melenk, and E.~Spence}, {\em When is the
  error in the $h$-{BEM} for solving the {H}elmholtz equation bounded
  independently of $k$?}, BIT Numerical Mathematics, 55 (2015), pp.~{171--214},
  \url{https://doi.org/10.1007/s10543-014-0501-5}.

\bibitem{Hiptmair06}
{\sc R.~Hiptmair and P.~Meury}, {\em Stabilized {FEM}-{BEM} coupling for
  {H}elmholtz transmission problems}, SIAM Journal on Numerical Analysis, 44
  (2006), pp.~{2107--2130}, \url{https://doi.org/10.1137/050639958}.

\bibitem{Sayas008}
{\sc A.~Laliena, M.-L. Rapún, and F.-J. Sayas}, {\em Symmetric boundary
  integral formulations for {H}elmholtz transmission problems}, Applied
  Numerical Mathematics, 59 (2009), pp.~{2814--2823},
  \url{https://doi.org/10.1016/j.apnum.2008.12.030}.

\bibitem{Melenk11-2}
{\sc M.~Löhndorf and J.~M. Melenk}, {\em Wavenumber-explicit $hp$-{BEM} for
  high frequency scattering}, SIAM Journal on Numerical Analysis, 49 (2011),
  pp.~{2340--2363}, \url{https://doi.org/10.1137/100786034}.

\bibitem{Melenk12}
{\sc J.~M. Melenk}, {\em Mapping properties of combined field {H}elmholtz
  boundary integral operators}, SIAM Journal on Mathematical Analysis, 44
  (2012), pp.~{2599--2636}, \url{https://doi.org/10.1137/100784072}.

\bibitem{Nedelec01}
{\sc J.-C. N\'ed\'elec}, {\em Acoustic and Electromagnetic Equations}, vol.~144
  of Applied Mathematical Sciences, Springer-Verlag New York, 2001,
  \url{https://doi.org/10.1007/978-1-4757-4393-7}.

\bibitem{Nit71}
{\sc J.~Nitsche}, {\em \"{U}ber ein {V}ariationsprinzip zur {L}\"osung von
  {D}irichlet-{P}roblemen bei {V}erwendung von {T}eilr\"aumen, die keinen
  {R}andbedingungen unterworfen sind}, Abhandlungen aus dem Mathematischen
  Seminar der Universit\"at Hamburg, 36 (1971), pp.~9--15,
  \url{https://doi.org/10.1007/BF02995904}.
\newblock Collection of articles dedicated to Lothar Collatz on his sixtieth
  birthday.

\bibitem{Sayas08}
{\sc M.-L. Rapún and F.-J. Sayas}, {\em Mixed boundary integral methods for
  {H}elmholtz transmission problems}, Journal of Computational and Applied
  Mathematics, 214 (2008), pp.~{238--258},
  \url{https://doi.org/10.1016/j.cam.2007.02.028}.

\bibitem{sauter-schwab}
{\sc S.~A. Sauter and C.~Schwab}, {\em Boundary Element Methods}, vol.~39 of
  Springer Series in Computations Mathematics, Springer-Verlag, 2011,
  \url{https://doi.org/10.1007/978-3-540-68093-2}.

\bibitem{Schatz74}
{\sc A.~H. Schatz}, {\em An observation concerning {R}itz--{G}alerkin methods
  with indefinite bilinear forms}, Mathematics of Computation, 28 (1974),
  pp.~{959--962}, \url{https://doi.org/10.1090/S0025-5718-1974-0373326-0}.

\bibitem{Sommerfeld}
{\sc S.~H. Schot}, {\em Eighty years of {S}ommerfeld's radiation condition},
  Historia Mathematica, 19 (1992), pp.~{385--401},
  \url{https://doi.org/10.1016/0315-0860(92)90004-U}.

\bibitem{Spence11}
{\sc E.~A. Spence, S.~N. Chandler-Wilde, I.~G. Graham, and V.~P. Smyshlyaev},
  {\em A new frequency-uniform coercive boundary integral equation for acoustic
  scattering}, Communications on Pure and Applied Mathematics, 64 (2011),
  pp.~{1384--1415}, \url{https://doi.org/10.1002/cpa.20378}.

\bibitem{Spence15}
{\sc E.~A. Spence, I.~V. Kamotski, and V.~P. Smyshlyaev}, {\em Coercivity of
  combined boundary integral equations in high-frequency scattering},
  Communications on Pure and Applied Mathematics, 68 (2015), pp.~{1587--1639},
  \url{https://doi.org/10.1002/cpa.21543}.

\bibitem{Stein07}
{\sc O.~Steinbach}, {\em Numerical approximation methods for elliptic boundary
  value problems}, Springer-Verlag, 2008,
  \url{https://doi.org/10.1007/978-0-387-68805-3}.
\newblock Translated from the 2003 {G}erman original.

\end{thebibliography}
\end{document}